\newtheorem{theorem}{Theorem}
\newtheorem{lem}[theorem]{Lemma}
\newtheorem{prop}[theorem]{Proposition}
\theoremstyle{remark}
\theoremstyle{definition}
\newtheorem{conj}[theorem]{Conjecture}
\numberwithin{equation}{section}
\newcommand{\abs}[1]{\left\lvert#1\right\rvert}
\newcommand{\set}[1]{\left\lbrace#1\right\rbrace}
\newcommand{\ab}{\allowbreak}
\DeclareMathOperator{\ex}{ex}
\DeclareMathOperator{\exlin}{\ex_{\rm local(in)}}
\DeclareMathOperator{\pil}{\pi_{\rm local}}
\DeclareMathOperator{\pilin}{\pi_{\rm local(in)}}
\DeclareMathOperator{\Gmax}{G_{\rm max}}
\DeclareMathOperator{\Gin}{G_{v({\rm in})}}
\DeclareMathOperator{\gin}{g_{v({\rm in})}}
\DeclareMathOperator{\Gmaxin}{G_{\max(in)}}
\DeclareMathOperator{\pilout}{\pi_{\rm local(out)}}
\DeclareMathOperator{\Gout}{G_{v({\rm out})}}
\DeclareMathOperator{\gout}{g_{v({\rm out})}}
\DeclareMathOperator{\Gmaxout}{G_{\max(out)}}
\DeclareMathOperator{\Gein}{G_{\emptyset(in)}}
\newcommand{\floor}[1]{\lfloor#1\rfloor}
\newcommand{\ds}{\displaystyle}
\begin{document}
% \linenumbers
%
\title{Maximum density of vertex-induced perfect cycles and paths in the hypercube}
\author{John Goldwasser$^*$}
\address{$^*$West Virginia University}
\email{jgoldwas@math.wvu.edu}
\author{Ryan Hansen$^*$}
\email{rhansen@math.wvu.edu}
% \address{West Virginia University}
% \affil[2]{West Virginia University}
 % \address{Math Department
 % \\West Virginia University
 % \\Morgantown, WV 26505}
 % \email{jgoldwas@math.wvu.edu,rhansen@math.wvu.edu}
% 
% \thanks{Thanks to Sarah!}
%\subjclass{garbage}
\keywords{hypercube, inducibility, perfect cycle}
%----------------------------------------------------------------

%\abstract{
\begin{abstract}
Let $H$ and $K$ be subsets of the vertex set $V(Q_d)$ of the $d$-cube $Q_d$ (we call $H$ and $K$ configurations in $Q_d$).  We say $K$ is an \emph{exact copy} of $H$ if there is an automorphism of $Q_d$ which sends $H$ to $K$. If $d$ is a positive integer and $H$ is a configuration in $Q_d$, we define $\pi(H,d)$ to be the limit as $n$ goes to infinity of the maximum fraction, over all subsets $S$ of $V(Q_n)$, of sub-$d$-cubes of $Q_n$ whose intersection with $S$ is an exact copy of $H$.
%Further, let $d$ and $n$ be positive integers with $d\leq n$, let $H$ be a configuration in $Q_d$ and let $S$ be a subset of $V(Q_n)$.  We let $G(H,d,n,S)$ denote the number of sub-$d$-cubes $R$ of $Q_n$ in which $S\cap R$ is an exact copy of $H$, let $\ex(H,d,n)$ be the maximum, over all subsets $S$ of $V(Q_n)$, of the fraction of sub-$d$-cubes $R$ of $Q_n$ in which $S\cap R$ is an exact copy of $H$, and we define the \emph{$d$-cube denisty} $\pi(H,d)$ of $H$ in $Q_d$ to be the limit as $n$ goes to infinity of $\ex(H,d,n)$.  Determining $\pi(H,d)$ seems to be hopelessly difficult for all but a few special configurations $H$.
  We determine $\pi(C_8,4)$ and $\pi(P_4,3)$ where $C_8$ is a ``perfect'' 8-cycle in $Q_4$ and $P_4$ is a ``perfect'' path with 4 vertices in $Q_3$, and make conjectures about $\pi(C_{2d},d)$ and $\pi(P_{d+1},d)$ for larger values of $d$.  In our proofs there are connections with counting the number of sequences with certain properties and with the inducibility of certain small graphs.  In particular, we needed to determine the inducibility of two vertex disjoint edges in the family of bipartite graphs.
\end{abstract}
%}

% ----------------------------------------------------------------
\maketitle

\section{Background} % (fold)
\label{sec:background}
	The $n$-cube, which we denote by $Q_n$, is the graph whose vertex set is the set of all binary $n$-tuples, with two vertices adjacent if and only if they differ in precisely one coordinate (so Hamming distance 1). Let $[n]=\set{1,2,\ldots,n}$.  We sometimes denote a vertex $\left[x_1,x_2,\ldots,x_n\right]$ of $Q_n$ by the subset $S$ of $[n]$ such that $i\in S$ if and only if $x_i=1$. So if $n=4$, then $\emptyset$ denotes $\left[ 0 0 0 0\right]$, and $\set{1,3}$ or 13, denotes $\left[ 1 0 1 0\right]$ and $\set{\set{1},\set{1,3}}$ (or $\set{1,13}$) denotes $\set{\left[ 1 0 0 0 \right], \left[ 1 0 1 0 \right]}$.  The weight of a vertex is the number of 1s.  For each positive integer $d$ less than or equal to $n$, $Q_n$ has $\binom{n}{d}2^{n-d}$ subgraphs which are isomorphic to $Q_d$ ($d$ coordinates can vary, while $n-d$ coordinates are fixed).
	
	Let $H$ and $K$ be subsets of $V(Q_d)$ (we call $H$ and $K$ configurations in $Q_d$).  We say $K$ is an \emph{exact copy} of $H$ if there is an automorphism of $Q_d$ which sends $H$ to $K$.  For example, $\set{\emptyset,12}$ is an exact copy of $\set{2,123}$ in $Q_3$, but $\set{2,13}$ is not (the vertices are distance 3 apart). So if $K$ is an exact copy of $H$ then they induce isomorphic subgraphs of $Q_d$, but the converse may not hold.
	
	Let $d$ and $n$ be positive integers with $d\leq n$, let $H$ be a configuration in $Q_d$ and let $S$ be a subset of $V(Q_n)$.  We let $G(H,d,n,S)$ denote the number of sub-$d$-cubes $R$ of $Q_n$ in which $S\cap R$ is an exact copy of $H$, $\Gmax(H,d,n)=\max_{S\subseteq V(Q_n)} G(H,d,n,S)$, $g(H,d,n,S)=\frac{G(H,d,n,S)}{\binom{n}{d}2^{n-d}}$ denote the fraction of sub-$d$-cubes $R$ of $Q_n$ in which $S\cap R$ is an exact copy of $H$, and $\ex(H,d,n)=\frac{\Gmax(H,d,n)}{\binom{n}{d}2^{n-d}}=\max_{S\subseteq V(Q_n)} g(H,d,n,S)$.  Note that $\ex(H,d,n)$ is the average of $2n$ densities $g(H,d,n-1,S_j)$, each of them the fraction of sub-$d$-cubes $R$ in a sub-$(n-1)$-cube of $Q_n$ in which $R\cap S_j$ is an exact copy of $H$, where $S_j$ is the intersection of a maximizing subset $S$ of $V(Q_n)$ with one of the $2n$ sub-$(n-1)$-cubes.  Hence $\ex(H,d,n)$ is the average of $2n$ densities, each of them less than or equal to $\ex(H,d,n-1)$, which means $\ex(H,d,n)$ is a nonincreasing function of $n$, so we can define the $d$-cube density $\pi(H,d)$ of $H$ by
	\[
		\pi(H,d)=\lim_{n\to\infty} ex(H,d,n).
	\]
So $\pi(H,d)$ is the limit as $n$ goes to infinity of the maximum fraction, over all $S\subseteq V(Q_n)$, of ``good'' sub-$d$-cubes -- those whose intersection with $S$ is an exact copy of $H$.

As far as we know, this paper is the first to define the notion of $d$-cube density.  There have been many papers on Tur\'{a}n and Ramsey type problems in the hypercube.  There has been extensive research on the maximum fraction of edges of $Q_n$ one can take with no cycle of various lengths \cite{Chung:1992fk,Conlon:2010,Furedi:2015,Thomason:2009} and a few papers on vertex Tur\'{a}n problems in $Q_n$ \cite{Johnson:1989cy,kostochka1976piercing}.  There has also been extensive work on which monochromatic cycles must appear in any edge-coloring of a large hypercube with a fixed number of colors \cite{Alon:2006,Axenovich:2006,Chung:1992,Conder:1993}, and a few results on which vertex structures must appear \cite{Goldwasser:2012}.

In \cite{AKS:2006,Goldwasser:2018,Offner:2008} results were obtained on the polychromatic number of $Q_d$ in $Q_n$, the maximum number of colors in an edge coloring of a large $Q_n$ such that every sub-$d$-cube gets all colors.

We wanted to investigate a different extremal problem in the hypercube: the maximum density of a small structure within a subgraph of a large hypercube.  Instead of using graph isomorphism to determine if two substructures are the same, it seemed to capture the essesnce of a hypecube better if the small structure was ``rigid'' within a sub-$d$-cube, and that is what motivated our definition of $d$-cube density.

There are strong connections between $d$-cube density and \emph{inducibility} of a graph, a notion of extensive study over the past few years.  Given graphs $G$ and $H$, with $\abs{V(G)}=n$ and $\abs{V(H)}=k$, the \emph{density} of $H$ in $G$, denoted $d_H(G)$, is defined by
	\[
		d_H(G)=\frac{\textrm{\# of induced copies of $H$ in $G$} }{\binom{n}{k}}
	\]
Pippenger and Golumbic \cite{PG:1975} defined the \emph{inducibility} $I(H)$ of $H$ by
	\[
		I(H)=\lim_{n\to\infty}\max_{\abs{V(G)}=n}d_H(G).
	\]
	Within the past few years $I(H)$ has been determined for all graphs $H$ with 4 vertices except the path $P_4$ \cite{exoo1986dense,Hirst:2014jp}\cite{EvenZoharLinial:2015}.
	
	Given a graph $H$, a natural candidate for maximizing the number of induced copies of $H$ is a balanced blow-up of $H$.  Equipartition the $n$ vertices into $\abs{V(H)}=k$ classes corresponding to the verteices of $H$ and add all possible edges between each pair of parts corresponding to an edge of $H$.  Any $k$-subset which has one vertex in each part will induce a copy of $H$, so $I(H)\geq \frac{k!}{k^k}$ for any graph $H$ with $k$ vertices.  Iterating blow-ups of $H$ within each part improves the bound to $I(H)\geq \frac{k!}{k^k-k}$.
	
	A natural generalization of $I(H)$ is to restrict $G$ to a particular class of graphs.  Let $\mathscr{G}$ be a class of graphs.  The inducibility of $H$ in $\mathscr{G}$ is defined by
	\[
		I(H,\mathscr{G})=\lim_{n\to\infty}\max_{\abs{V(G)}=n,G\in\mathscr{G}}d_H(G),
	\]
	if the limit exists (if $\mathscr{G}$ is all graphs the limit always exists).  Let $\mathscr{T}$ be the family of all triangle-free graphs.  Hatami et al. \cite{Hatami:2013} and Grzesik \cite{Grzesik:2012} used flag algebras to show that $I(C_5,\mathscr{T})=\frac{5!}{5^5}=\frac{24}{625}$, achieving the non-iterated blow-up lower bound.  
	
	In \cite{CLP:2020}, Choi, Lidicky, and Pfender consider the inducibility of oriented graphs (directed graphs with no 2-cycles).  For the directed path $\vv{P_k}$ they conjectured that
	\[
		I(\vv{P_k})=\frac{k!}{(k+1)^{k-1}-1}
	\]
	the lower bound provided by an iterated blow-up of the directed cycle $\vv{C_{k+1}}$.  To eliminate the possibility of iterated blow-ups they considered the family $\vv{\mathcal{T}}$ of oriented graphs with no transitive tournament on three vertices (so every 3-cycle is directed).  They conjectured that
	\[
		I(\vv{P_k},\vv{\mathcal{T}})=\frac{k!}{(k+1)^{k-1}}
	\]
	Again, the lower bound is provided by a blow-up of $\vv{C_{k+1}}$ (no iterations).  They used flag algebras to prove their conjecture for $k=4$:
	\[
		I(\vv{P_4},\vv{\mathcal{T}})=\frac{4!}{5^3}=\frac{24}{125}
	\]
	
	It has been shown \cite{Bollobas:1986bfa,BrownSidorenko:1994} that if $H$ is a complete bipartite graph then the graph that maximizes $I(H)$ can be chosen to be complete bipartite.  There are also a few results on inducibility of 3-graphs \cite{FalgasRavry:2012uu}.
	
	As with inducibility, $d$-cube density is exceedingly difficult to determine for all but a few configurations $H$ and values of $d$.  A different kind of blow-up can be used to produce lower bounds.  For a few configurations $H$ we have been able to prove a matching upper bound, generally using results on inducibility to do so.  In this paper we determine the $d$-cube density of a ``perfect'' path with 4 vertices in $Q_3$ and a ``perfect'' 8-cycle in $Q_4$.

\section{Results} % (fold)
\label{sec:results}

	We have some results in a forthcoming paper for certain configurations $H$ when $d$ is equal to 2, 3, or 4, and for a couple of infinite families with $d$ any integer greater than 2.  If $H$ is two opposite vertices in $Q_2$, clearly $\pi(H,2)=1$ (let $S$ be all vertices in $Q_n$ of even weight).  A more interesting example is when $H$ is two adjacent vertices in $Q_2$.  Then it is not hard to show that $\pi(H,2)=\frac{1}{2}$.  (For the lower bound, take $S$ to be all vertices in $Q_n$ such that the sum of coordinates 1 through $\floor{\frac{n}{2}}$ is even.  Any sub-2-cube which has one varying coordinate in and one out of $\left[1,\floor{\frac{n}{2}}\right]$ will have an exact copy of $H$.)  We have been unable to determine $\pi(W_d,d)$ for any $d\geq 2$ when $W_d$ is a single vertex in $Q_d$.  Letting $S$ be the set of all vertices in $Q_n$ with weight a multiple of 3 shows that $\pi(W_2,2)\geq \frac{2}{3}$.  Using flag algebras Rahil Baber \cite{Baber:2014p} has shown that $\pi(W_2,2)\leq .686$.  We suspect that for sufficiently large $d$ one cannot do better than choosing vertices randomly with uniform probability $\frac{1}{2}^d$, wwhich gives a density of $\frac{1}{e}$ in the limit as $d$ goes to infinity.
	
	Let $P_{d+1}$ denote the vertex set of a path in $Q_d$ with $d+1$ vertices whose endpoints are Hamming distance $d$ apart.  We call $P_{d+1}$ a \emph{perfect path}.  For example, $\set{\emptyset,1,12,123,1234}$ and $\set{13,3,\emptyset,4,24}$ are both perfect paths in $Q_4$, while $\set{13,3,\emptyset,4,14}$ is not, even though these 5 vertices do induce a graph-theoretic path.
	
	Let $C_{2d}$ denote the vertex set of a $2d$-cycle in $Q_d$ where all $d$ opposite pairs of vertices are distance $d$ apart.  We call $C_{2d}$ a \emph{perfect $2d$-cycle}.  The only graph-theoretic induced 6 cycle in $Q_3$ is perfect, but while $\set{\emptyset,1,12,123,1234,234,34,4}$ is a perfect 8-cycle, $\set{\emptyset,1,12,123,23,234,34,4}$ and $\set{\emptyset,1,12,123,1234,134,34,3}$ induce nonisomorphic 8-cycles in $Q_4$ which are not perfect.
	
	The main results in this paper are the two following theorems.
	
	\begin{theorem}\label{PC8theorem}
		$\pi(C_8,4)=\frac{3}{32}$
	\end{theorem}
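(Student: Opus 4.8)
The theorem asks for the exact value $\pi(C_8,4)=\tfrac{3}{32}$, so there are two halves: a construction giving the lower bound, and a matching upper bound. For the lower bound I would look for an explicit (likely periodic or ``blow-up'' style) subset $S\subseteq V(Q_n)$ such that the fraction of sub-$4$-cubes $R$ with $S\cap R$ an exact copy of a perfect $8$-cycle tends to $\tfrac{3}{32}$. Since a perfect $8$-cycle in $Q_4$ consists of $8$ vertices sitting in a $16$-vertex cube in a very symmetric way (it uses each of the $4$ coordinate directions exactly twice as ``step'' directions around the cycle, and the $8$ vertices are exactly one of the two color classes... no — actually the $8$ vertices of a perfect $8$-cycle have weights taking all values, so they are \emph{not} a color class), I would guess the extremal $S$ is built from a small pattern in a few coordinates, e.g. $S=\{x: f(x_1,\dots,x_k)\text{ holds}\}$ where $f$ depends on partial sums, chosen so that a sub-$4$-cube is ``good'' exactly when its $4$ varying coordinates are distributed among blocks in a prescribed way. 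The value $\tfrac{3}{32}$ strongly suggests a count like (number of good direction-patterns)$/2^4\cdot(\text{something})$, or a probability computed from a blow-up of $C_8$ itself; I would first pin down exactly which configurations arise and compute the density of the natural candidate.

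\textbf{Reduction to a local/inducibility problem.} The averaging remark in the introduction says $\operatorname{ex}(H,d,n)$ is nonincreasing in $n$ and is an average of $2n$ densities in sub-$(n-1)$-cubes; iterating this, the value $\pi(C_8,4)$ should be computable ``locally'' — i.e. it is governed by how $S$ behaves across a bounded number of coordinate directions. Concretely I would fix a large $n$, condition on the assignment to the $n-4$ fixed coordinates of a sub-$4$-cube, and express $g(C_8,4,n,S)$ as an average over this conditioning of the probability that a random $4$-set of directions, together with the induced pattern of $S$ on the resulting $Q_4$, yields a perfect $8$-cycle. The introduction explicitly flags two ingredients: ``counting the number of sequences with certain properties'' and ``the inducibility of two vertex-disjoint edges in the family of bipartite graphs.'' So I expect the proof routes the upper bound through: (i) classifying which patterns of $S\cap Q_4$ are exact copies of $C_8$ and recognizing that such a pattern, projected onto a pair of the four directions, looks like a pair of disjoint edges (or a matching) inside a $Q_2$, hence controlling the global count reduces to bounding the density of induced matchings $2K_2$ in an auxiliary bipartite graph built from $S$; and (ii) a sequence-counting lemma that handles the combinatorics of how the four step-directions of $C_8$ can be interleaved.

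\textbf{Carrying it out.} First I would enumerate, up to automorphism of $Q_4$, the $8$-vertex configurations that are exact copies of a perfect $8$-cycle, and record the ``trace'' each leaves in every $2$-dimensional coordinate projection and every sub-$3$-cube — this is the bookkeeping that connects to $P_4$ in $Q_3$ and to $2K_2$. Second, for the lower bound I would write down the candidate $S$ (a product/blow-up construction), verify by direct computation that its good-cube density equals $\tfrac{3}{32}$, and check the construction is ``stable'' under the iterated-blow-up improvement so that nothing larger is attainable that way. Third, for the upper bound I would set up the averaging over conditionings, bound the conditional good-density of each sub-$Q_4$ by a linear functional of the local statistics of $S$ (pair-of-directions matching densities, single-direction balance), and then invoke the bipartite $2K_2$-inducibility result — stated earlier as something the authors ``needed to determine'' — to cap that functional by exactly $\tfrac{3}{32}$, with equality forcing the structure of the extremal construction.

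\textbf{Main obstacle.} The hard part will be step three: turning the purely local description of a good sub-$4$-cube into a \emph{clean} linear (or easily optimized) expression in a small number of density parameters of $S$, and showing the optimum of that expression is attained precisely at the value coming from the bipartite-$2K_2$ inducibility bound. The perfect $8$-cycle has enough internal structure that many different local patterns of $S$ contribute, and several of them interact (a cube can fail to be ``good'' in more than one way), so the inclusion–exclusion / case analysis needed to get a tight — not just asymptotically correct — bound is where the real work lies; getting the constant to be exactly $\tfrac{3}{32}$ rather than something slightly larger is the crux.
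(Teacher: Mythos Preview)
Your proposal correctly identifies the two key ingredients --- a blow-up construction for the lower bound and the bipartite $2K_2$ inducibility for the upper bound --- but the route you sketch for the upper bound misses the actual reduction, and the route you do sketch (conditioning on the $n-4$ fixed coordinates, looking at $2$-dimensional projections of $S\cap Q_4$) is not how the argument goes and is unlikely to close cleanly.

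The paper's upper bound is not an average over conditionings of sub-$4$-cubes; it is a bound on the \emph{local density at a single vertex}. One fixes a vertex $v\in S$ (WLOG $v=\emptyset$) and bounds the fraction of sub-$4$-cubes \emph{through $v$} whose intersection with $S$ is a perfect $8$-cycle. A crucial point you do not mention is that $C_8$ is self-complementary in $Q_4$: this is what makes $\pil(C_8,4)=\pilin(C_8,4)$, so that bounding the ``in'' local density suffices. Without this observation you would also have to control the local density at vertices \emph{outside} $S$, and that is exactly the obstruction that currently prevents the paper from proving Conjecture~3 for $d\ge 5$.

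Once you are at $\emptyset$, each good sub-$4$-cube corresponds to an ordered sequence $a_1a_2a_3a_4$ of its four varying coordinates (recording the order in which the perfect cycle through $\emptyset$ flips them), and the family of such sequences satisfies a combinatorial ``Property~$U$'' on end-segments. The bipartite graph then has bipartition \{symbols that occur as first/last in some sequence\} versus \{all other symbols\}, with $ab$ an edge iff some sequence begins $ab$ or ends $ba$; each sequence $a_1b_1b_2a_2$ then yields a $4$-set inducing $2K_2$ in this graph. That is the bridge to the bipartite inducibility lemma --- not a projection of $C_8$ onto a pair of coordinate directions inside $Q_2$. Your ``linear functional of local statistics'' picture and the worry about inclusion--exclusion among failure modes do not arise: the sequence translation is an exact bijection, and $t(4)\le \tfrac{3}{32}$ follows directly from the $2K_2$ bound.
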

	
	\begin{theorem}\label{P4theorem}
		$\pi(P_4,3)=\frac{3}{8}$
	\end{theorem}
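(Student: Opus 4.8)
The plan is to prove the two bounds separately; for the lower bound I would use a ``$Q_4$-blow-up,'' and for the upper bound an auxiliary-graph/Tur\'an-type argument combined with the monotonicity of $\ex(P_4,3,n)$ established above. For the blow-up, one first exhibits a set $T\subseteq V(Q_4)$ such that each of the $8$ sub-$3$-cubes of $Q_4$ meets $T$ in an exact copy of $P_4$ (for instance $T=\{0000,0100,0110,0111,1000,1001,1011,1111\}$, the union of the two maximal chains $\emptyset\subset\{2\}\subset\{2,3\}\subset\{2,3,4\}$ and $\{1\}\subset\{1,4\}\subset\{1,3,4\}\subset\{1,2,3,4\}$ of the Boolean lattice; all eight restrictions can be checked directly). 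Then partition $[n]$ into four classes $I_1,I_2,I_3,I_4$ of nearly equal size, let $\phi\colon V(Q_n)\to V(Q_4)$ send $x$ to the parity vector $(|{\rm supp}(x)\cap I_\ell| \bmod 2)_{\ell=1}^{4}$, and set $S=\phi^{-1}(T)$. If a sub-$3$-cube $R$ of $Q_n$ has its three free coordinates in three distinct classes, then $\phi$ maps $R$ isomorphically onto one of the $8$ sub-$3$-cubes $R'$ of $Q_4$ and maps $S\cap R$ onto $T\cap R'$, so $S\cap R$ is an exact copy of $P_4$. Since a fraction of the sub-$3$-cubes tending to $\binom{4}{3}(n/4)^{3}/\binom{n}{3}\to\frac{6}{16}=\frac38$ has this property, we get $\ex(P_4,3,n)\ge\frac38-o(1)$; because $\ex(P_4,3,n)$ is nonincreasing, this forces $\ex(P_4,3,n)\ge\frac38$ for all $n$, hence $\pi(P_4,3)\ge\frac38$.

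For the upper bound the first step is the elementary characterization of ``good'' sub-$3$-cubes: for a sub-$3$-cube $R$ with free coordinates $i,j,k$, the set $S\cap R$ is an exact copy of $P_4$ if and only if $|S\cap R|=4$ and the three coordinate directions of $R$ split $S\cap R$ into parts of sizes $(1,3),(1,3),(2,2)$ in some order; equivalently, exactly one direction is ``balanced,'' in that direction both parallel $2$-faces of $R$ meet $S$ in an edge, while in each of the other two directions one $2$-face meets $S$ in a single vertex and the other in a path on three vertices. This is proved by a short case analysis, reducing (after a change of coordinates) to $R$-copy $\{000,100,110,111\}$.

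Using this, I would introduce an auxiliary graph $G_S$ on the vertex set $[n]$ of coordinate directions — roughly, $i\sim j$ when the pair $\{i,j\}$ can occur among the free directions of a perfect-path sub-$3$-cube of $(Q_n,S)$ — and argue that (a) the three free directions of any good sub-$3$-cube form a triangle of $G_S$, and (b) $G_S$ is $K_5$-free. Granting these, the number of good sub-$3$-cubes is at most $2^{n-3}$ times the number of triangles of $G_S$, and a $K_5$-free graph on $n$ vertices has at most $(1+o(1))\binom{4}{3}(n/4)^3=(1+o(1))n^{3}/16$ triangles (Zykov's theorem — equivalently $I(K_3,\{K_5\text{-free graphs}\})=\frac38$), with equality only for the balanced complete $4$-partite graph. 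Thus the number of good sub-$3$-cubes is at most $(\frac38+o(1))\binom{n}{3}2^{n-3}$, so $\ex(P_4,3,n)\to\frac38$ and, with the lower bound, $\pi(P_4,3)=\frac38$. (Note that the extremal $K_5$-free graph here is exactly the ``direction graph'' of the blow-up above, so the two sides match; one can also repackage steps (a)–(b) as counting the maximal chains / monotone sequences inside the $8$-vertex intervals of $Q_n$ on which $S$ concentrates, the ``counting sequences'' flavor mentioned in the introduction.)

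The hard part is step (b): pinning down the precise definition of $G_S$ for which both ``good implies triangle'' and ``$K_5$-free'' hold, and then proving $K_5$-freeness — equivalently, ruling out five coordinate directions that pairwise co-occur in perfect-path sub-$3$-cubes; this is where the characterization of perfect paths has to be pushed hard. A secondary but genuinely necessary point is verifying that a set $T\subseteq V(Q_4)$ with all eight sub-$3$-cube restrictions perfect actually exists, and controlling the lower-order terms (sub-$3$-cubes with two free coordinates in one class, and boundary effects of an uneven partition of $[n]$) so that the density truly converges to $\frac38$ and not to something smaller.
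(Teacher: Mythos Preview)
Your lower-bound construction is correct and is the paper's own: the set $T$ you write down is an exact copy of the perfect $8$-cycle $C_8$ in $Q_4$ (the vertices form the cycle $0000$--$0100$--$0110$--$0111$--$1111$--$1011$--$1001$--$1000$ with all four antipodal pairs at distance $4$), so your blow-up is exactly the blow-up of $C_{2(d+1)}$ used in Proposition~\ref{path_density}.

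The upper bound has a real gap. With the edge relation you sketch---$i\sim j$ whenever $\{i,j\}$ lies among the free coordinates of \emph{some} good sub-$3$-cube---claim~(b) is false: for uniformly random $S$ each sub-$3$-cube is good with probability $24/256>0$, and since any fixed pair $\{i,j\}$ is contained in exponentially many free-coordinate triples (each with $2^{n-3}$ sub-cubes), with high probability every pair is an edge and $G_S=K_n$. More to the point, there is no evident refinement of the definition making both (a) and (b) hold, because the same pair of directions can play entirely different structural roles in good sub-$3$-cubes located in different regions of $Q_n$; a single global graph on $[n]$ discards exactly the information you need. You flag this as ``the hard part,'' but without a working definition of $G_S$ the argument does not get off the ground.

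The paper's route is different: it localises. Since $P_4$ is self-complementary in $Q_3$, one has $\pi(P_4,3)\le\pil(P_4,3)=\pilin(P_4,3)$. Fixing $v\in S$, the good sub-$3$-cubes through $v$ biject with ``$3$-bisequences'' on $[n]$ (Proposition~\ref{path-bisequence}); the symbols split into a set $A$ of ``first elements'' and $W=[n]\setminus A$, and for each fixed $e\in A$ (respectively $x\in W$) the relevant bisequences determine a \emph{triangle-free} graph on $W$ (respectively $A$) by Lemma~\ref{triangle-free3}. Tur\'an for $K_3$---not Zykov for $K_5$---applied to each of these graphs and summed yields at most $a\cdot w^2/4+w\cdot a^2/4\le n^3/16$ bisequences, whence $\pilin(P_4,3)\le 3/8$. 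So the auxiliary graphs live at a single vertex, one per ``outer'' symbol, rather than as one global graph on coordinate directions.
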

	
	These are special cases of the following conjectures.
	
	\begin{conj}\label{cycleconj}
		$\pi(C_{2d},d)=\frac{d!}{d^d}$ for all $d\geq 4$.
	\end{conj}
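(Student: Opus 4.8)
I would establish the two inequalities $\pi(C_{2d},d)\ge\frac{d!}{d^d}$ and $\pi(C_{2d},d)\le\frac{d!}{d^d}$ separately; since the case $d=4$ of the upper bound is Theorem~\ref{PC8theorem}, the real content is the upper bound for $d\ge5$.

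\emph{Lower bound.} Fix a perfect $2d$-cycle $C\subseteq V(Q_d)$, partition $[n]$ into blocks $B_1,\dots,B_d$ of nearly equal size, and for $x\in V(Q_n)$ put $p_c(x)=\sum_{i\in B_c}x_i\bmod2$ and $S=\set{x : (p_1(x),\dots,p_d(x))\in C}$. If $R$ is a sub-$d$-cube with exactly one free coordinate in each block, then $x\mapsto(p_1(x),\dots,p_d(x))$ restricted to $R$ is a coordinate permutation followed by a translation, hence an automorphism of $Q_d$ taking $S\cap R$ onto $C$; so every such ``transversal'' sub-$d$-cube is good, for all $2^{n-d}$ fixings of the remaining coordinates. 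The fraction of transversal sub-$d$-cubes is $\frac{\prod_{c=1}^d\abs{B_c}}{\binom nd}\to\frac{d!}{d^d}$, the probability that a uniformly random map $[d]\to[d]$ is a bijection, so $g(C_{2d},d,n,S)\ge\frac{d!}{d^d}-o(1)$ and $\pi(C_{2d},d)\ge\frac{d!}{d^d}$. The construction cannot be improved by iterating the blow-up inside the blocks: each block governs only a single parity bit, so there is no room to nest a perfect-cycle pattern — the same rigidity that pins $I(C_5,\mathscr{T})$ at $\frac{5!}{5^5}$ with no iteration \cite{Hatami:2013,Grzesik:2012}, and what makes equality plausible here.

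\emph{Upper bound.} For $d=4$ this is Theorem~\ref{PC8theorem}, whose proof reduces the count of good sub-$4$-cubes to the inducibility of two disjoint edges $2K_2$ within the family of bipartite graphs, there equal to $\frac{3}{32}=\frac{4!}{4^4}$ (the value of the single, non-iterated balanced blow-up of $2K_2$). I would try to run the same two steps for general $d$. \emph{Step 1 (structure):} for an arbitrary $S\subseteq V(Q_n)$, describe when $S\cap R$ is an exact copy of a perfect $2d$-cycle — up to an automorphism of $Q_d$ such a cycle is the set of vertices whose $1$-set is a prefix or a suffix of the coordinates in some fixed order, so a good $R$ records, among other things, a linear order up to reversal on its free coordinates — and, via the ``counting sequences with prescribed properties'' bookkeeping, translate the number of good sub-$d$-cubes (to within $o(1)$, uniformly over the fixed coordinates) into the number of induced copies of a fixed $d$-vertex graph $T_d$, with $T_4=2K_2$, in an auxiliary graph $G_S$ on $O(n)$ vertices built from $S$. \emph{Step 2 (extremal):} show $G_S$ always lies in a family $\mathscr{F}_d$ engineered — as bipartiteness is for $d=4$ — so that no \emph{iterated} blow-up of $T_d$ belongs to $\mathscr{F}_d$, and prove $I(T_d,\mathscr{F}_d)=\frac{d!}{d^d}$, the density of the single balanced blow-up of $T_d$. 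Combining, $g(C_{2d},d,n,S)\le I(T_d,\mathscr{F}_d)+o(1)=\frac{d!}{d^d}$ for every $S$, hence $\pi(C_{2d},d)\le\frac{d!}{d^d}$.

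\emph{Main obstacle.} Steps 1 and 2 for $d\ge5$ are where essentially all the difficulty sits. First, one must pin down the right $T_d$ and $\mathscr{F}_d$: for $d>4$ the prefix/suffix datum of a perfect $2d$-cycle is combinatorially richer (already at $d=4$ it packages a pairing \emph{together with} an ``outer'' pair, which only collapses onto $2K_2$-density after a multiplicity count), so the reduction may produce not a single clean inducibility instance but a weighted sum of several that must be bounded simultaneously. Second, even granting the reduction, $I(T_d,\mathscr{F}_d)=\frac{d!}{d^d}$ is exactly the flavour of restricted inducibility statement that has been settled only in scattered cases — $C_5$ in triangle-free graphs \cite{Hatami:2013,Grzesik:2012}, $\vv{P_4}$ in $\vv{\mathcal{T}}$ \cite{CLP:2020} — usually by flag algebras, and in particular one must rule out that some less obvious member of $\mathscr{F}_d$ beats the plain balanced blow-up, which is not clear a priori. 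Third, the $o(1)$ bookkeeping of Step 1 must be made uniform in $d$, whereas the published argument is tailored to $d=4$. For these reasons I expect the lower bound to be routine and the upper bound for $d\ge5$ to carry nearly the whole weight of the conjecture.
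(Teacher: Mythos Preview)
This statement is a \emph{conjecture}; the paper proves only the case $d=4$ (Theorem~\ref{PC8theorem}) and says explicitly that the general case is open. So there is no paper-proof to match for $d\ge5$, and your proposal is, appropriately, a strategy with acknowledged obstacles. But the paper's partial progress follows a different route from yours and identifies a different obstacle than the one you highlight.

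The paper never attempts a global reduction of $g(C_{2d},d,n,S)$ to an inducibility problem. It bounds $\pi(C_{2d},d)\le\pil(C_{2d},d)=\max\{\pilin,\pilout\}$ and works \emph{locally} at a single vertex $v\in S$: the count of good sub-$d$-cubes through $v$ is exactly the Property-$U$ sequence problem $T(d,n)$, so $\pilin(C_{2d},d)=t(d)$ (Proposition~\ref{pilin=td}). The bipartite graph and the $2K_2$-inducibility of Theorem~\ref{bipartite_max_4_set_matching} are used only to show $t(4)\le\frac{3}{32}$, and the vertex set of that graph is the coordinate set $[n]$, not something ``built from $S$'' globally. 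The paper then reports that Chen has already proved $t(d)=\frac{d!}{d^d}$ for all $d\ge4$, so the restricted-inducibility-type bound you worry about in Step~2 is, in disguise, \emph{done} for every $d$ --- no new $T_d$ or $\mathscr{F}_d$ is needed. The remaining gap, as the paper states, is $\pilout(C_{2d},d)$: for $d=4$ this comes for free because $C_8$ is self-complementary in $Q_4$ (indeed $\abs{C_8}=8=2^4-8$), but for $d\ge5$ one has $2d\ne2^d-2d$, so $\overline{C_{2d}}$ is not even the right size and the self-complementarity trick collapses. Your Step~1 would have to bypass the local-density framework entirely, but as written it is too vague to assess --- it is not clear what single auxiliary graph on $O(n)$ vertices could encode, uniformly over all $2^{n-d}$ choices of fixed coordinates, whether $S\cap R$ is a perfect cycle, since the paper's auxiliary graph is tied to one base vertex and different base vertices give different graphs. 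The genuine missing piece is therefore not your Step~2 but your Step~1, or equivalently a bound on $\pilout$.
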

	
	\begin{conj}\label{pathconj}
		$\pi(P_{d+1},d)=\frac{d!}{(d+1)^{d-1}}$ for all $d\geq 3$.
	\end{conj}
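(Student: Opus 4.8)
The plan is to establish the two inequalities separately; the construction below for the lower bound also gives the lower bound in Theorem~\ref{PC8theorem}, and the reduction sketched for the upper bound is the one carried out in full for $d=3$ in Theorem~\ref{P4theorem}.

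\emph{Lower bound.} Partition $[n]$ into $d+1$ blocks $B_1,\dots,B_{d+1}$ of sizes $\floor{n/(d+1)}$ or $\floor{n/(d+1)}+1$, and fix a set $A\subseteq V(Q_{d+1})$ whose intersection with each facet $\set{x:x_i=\eps}$ of $Q_{d+1}$ is a perfect path of that facet; the vertex set of the standard perfect $2(d+1)$-cycle $\emptyset,\set{1},\set{1,2},\dots,[d{+}1],[d{+}1]\setminus\set{1},\dots,\set{d{+}1},\emptyset$ does this, since it is invariant under complementation and its trace on a facet is an initial-segment chain followed by a final-segment chain. Let $S=\setst{x\in V(Q_n)}{(\abs{x\cap B_1},\dots,\abs{x\cap B_{d+1}})\bmod 2\in A}$. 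If the $d$ varying coordinates of a sub-$d$-cube $R$ lie in $d$ distinct blocks, the block-parity map is a bijection from $V(R)$ onto a facet of $Q_{d+1}$, so $S\cap R$ is an exact copy of $P_{d+1}$. Otherwise $S\cap R$ is the preimage, under a surjective $\mathbb F_2$-linear map from $V(R)\cong\mathbb F_2^d$ onto $\mathbb F_2^r$ with $r<d$, of some set, hence is invariant under translation by each nonzero $\kappa$ in the kernel; such a translation is a fixed-point-free automorphism of the induced subgraph on $S\cap R$. As $\mathrm{Aut}(P_{d+1})$ is trivial or generated by the reversal, and the reversal fixes the middle vertex when $d$ is even, while for $d$ odd it would force $\kappa=\mathbf 1$ and hence $v_2\oplus v_d=\mathbf 1$ — false, since writing the path as $v_1,\dots,v_{d+1}$ with $v_{j+1}=v_j\oplus e_{\sigma(j)}$ one has $v_2\oplus v_d=\mathbf 1\oplus e_{\sigma(1)}\oplus e_{\sigma(d)}$ and $\sigma(1)\neq\sigma(d)$ because $d\ge 3$ — we get $S\cap R\not\cong P_{d+1}$ in this case. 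Since the fraction of sub-$d$-cubes whose $d$ coordinates hit $d$ distinct blocks tends to $(d+1)!/(d+1)^d=d!/(d+1)^{d-1}$ as $n\to\infty$, the lower bound follows.

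\emph{Upper bound.} One cannot localize to a fixed cube, since the construction already makes \emph{every} facet of $Q_{d+1}$ itself an exact copy of $P_{d+1}$, so $\ex(P_{d+1},d,d+1)=1$; one must exploit $\pi(P_{d+1},d)=\inf_m\ex(P_{d+1},d,m)$. For $d=3$ (Theorem~\ref{P4theorem}) the plan is: fix a coordinate $i$, split each candidate $3$-cube $R$ along $i$ into parallel $2$-cubes $R_0,R_1$, and check that $S\cap R$ is an exact copy of $P_4$ exactly when $(\abs{S\cap R_0},\abs{S\cap R_1})=(2,2)$ with the two traces edges in perpendicular directions — the alignment being automatic, since a horizontal and a vertical edge of a $4$-cycle always meet — or $(1,3)$ (and symmetrically $(3,1)$) with $S\cap R_1$ a $3$-path having the $e_i$-neighbour of the lone vertex of $S\cap R_0$ as an endpoint. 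The reason $2K_2$ enters is that a perfect $P_4$ decomposes as two vertex-disjoint edges in distinct directions joined by a third edge; counting such configurations over all $3$-cubes, after averaging over $i$, turns the extremal problem into one governed by the inducibility of two vertex-disjoint edges $2K_2$: in the class of all graphs for $P_4$, where complement-invariance of $I(\cdot)$ together with the complete-bipartite result of \cite{Bollobas:1986bfa,BrownSidorenko:1994} cited above gives $I(2K_2)=I(C_4)=\tfrac38$, attained by two disjoint cliques; and in the class of bipartite graphs for the companion $C_8$ statement, where two disjoint balanced complete bipartite graphs give $\tfrac{3}{32}$. Thus Theorem~\ref{P4theorem} identifies $\pi(P_4,3)$ with $I(2K_2)$ and Theorem~\ref{PC8theorem} identifies $\pi(C_8,4)$ with the bipartite inducibility of $2K_2$, and proving the latter is the technical heart of that half.

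\emph{The main obstacle.} The hard part, and the reason Conjecture~\ref{pathconj} is open for $d\ge 5$, is this upper bound in general. The clean reduction to one four-vertex inducibility that works when $d=3$ does not obviously persist: for larger $d$ the governing local quantity is the much richer question of how densely a subset of a large cube can meet the facets of $(d+1)$-cubes by perfect paths — a genuinely higher-order inducibility (or entropy) optimization rather than one about a fixed small graph. A natural route would be to pass to a limiting object, a measurable analogue for subsets of $Q_n$ of a graphon, and run a flag-algebra-style semidefinite argument to pin the constant $d!/(d+1)^{d-1}$; but I expect that to require a structural input — plausibly a characterization of the extremal limit configurations as iterated parity blow-ups of perfect cycles — that is not yet available beyond the cases $d\le 4$ treated here.
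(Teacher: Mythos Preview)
This statement is a conjecture; the paper proves only the case $d=3$ (Theorem~\ref{P4theorem}).  So the relevant comparison is between your sketch for $d=3$ and the paper's actual argument.

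\textbf{Lower bound.}  Your construction is exactly the blow-up of $C_{2d+2}$ in Proposition~\ref{path_density}, and your count of ``hitting'' sub-$d$-cubes matches.  The additional argument that sub-$d$-cubes with two varying coordinates in the same block cannot carry an exact $P_{d+1}$ is correct and pleasant, but it is not needed for the lower bound: you only have to exhibit enough good cubes, not rule out extras.  (Minor slip: you write that the cases $d\le 4$ are ``treated here''; the paper proves only $d=3$, with $d=4$ supported solely by Baber's flag-algebra numerics.)

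\textbf{Upper bound for $d=3$.}  Here your sketch diverges from the paper, and the divergence is substantive.  The paper does \emph{not} split $3$-cubes along a coordinate, and it does \emph{not} reduce $\pi(P_4,3)$ to the ordinary inducibility $I(2K_2)$.  What it does instead is:
\begin{enumerate}
\item observe that $P_4$ is self-complementary in $Q_3$, so $\pi(P_4,3)\le \pil(P_4,3)=\pilin(P_4,3)$;
\item localize at a single vertex $\emptyset\in S$ and encode each good sub-$3$-cube through $\emptyset$ as a \emph{$3$-bisequence} with Property~$V$ (Proposition~\ref{path-bisequence}), yielding $\pilin(P_4,3)=b(3)$;
\item show (Lemma~\ref{triangle-free3}) that for each fixed first symbol the induced auxiliary graph on the remaining symbols is triangle-free, and apply Tur\'an's theorem to get $b(3)\le\tfrac38$.
\end{enumerate}
The appearance of the number $\tfrac38$ is thus via Tur\'an on triangle-free graphs, not via $I(2K_2)$; the bipartite $2K_2$ inducibility (Theorem~\ref{bipartite_max_4_set_matching}) enters only in the $C_8$ proof.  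Your proposed route --- split a $3$-cube across a coordinate, classify the $(2,2)$ and $(1,3)$ traces, and ``average over $i$'' to land on $I(2K_2)$ --- is not carried far enough to check: you never specify the host graph in which $2K_2$ densities are being counted, nor how the $(1,3)$ contributions are controlled, nor why averaging over the distinguished coordinate yields a bound rather than an identity.  As written it is a heuristic, not a proof, and it is not the reduction the paper performs.

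Your remark that ``one cannot localize to a fixed cube'' because $\ex(P_{d+1},d,d+1)=1$ is correct, but the paper's localization is to a \emph{vertex}, not a small cube, and that is precisely what makes the argument go through.
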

	
	Note that the formulas in these two conjectures are the same as in the conjectures about the inducibility of directed cycles and paths in oriented graphs.  Conjecture \ref{cycleconj} is significant because, as we show in Proposition \ref{mindensity}, $\pi(H,d)\geq \frac{d!}{d^d}$ for all configurations $H$ in $Q_d$ for all $d\geq 1$.  To show $\frac{d!}{d^d}$ is also an upper bound when $d=4$ we needed to find the inducibility of two vertex disjoint edges in the family of all bipartite graphs.  To prove both Theorem \ref{PC8theorem} and Theorem \ref{P4theorem} we show that the $d$-cube density we are trying to determine is equal to the fraction of $d$-sequences of an $n$-set which have certain properties and then we solve the sequence problems.
	
	% In this paper, we show that $\pi(P_4,3)=\frac{3}{8}$ and $\pi(Q_8,4)=\frac{3}{32}$.  The second of these results is significant because, as we show in Proposition \ref{mindensity}, $\pi(H,4)\geq \frac{3}{32}$ for every configuration $H$ in $Q_4$.  To prove that $\pi(Q_8,4)\leq \frac{3}{32}$ we needed to answer a different type of question about inducibility: Which bipartite graph with $n$ vertices induces the maximum number of pairs of disjoint edges?
	
	% We also give inductive arguments to determine $\pi(P_{d+1},d)$ and $\pi(Q_{2d},d)$ for all $d\geq 4$ which would become proofs if we could do the base cases, i.e. prove that $\pi(P_5,4)$ and $\pi(Q_10,5)$ have the values we conjecture them to have.

% section results (end)

\section{Constructions} % (fold)
\label{sec:constructions}

	Consider the following construction which gives a lower bound for the $d$-cube density of any configuration $H$ in $Q_d$, for any $d$.  Let $[n]$ denote the set $\{1,2,\ldots,n\}$.  We partition $[n]$ into $A_1,A_2,\ldots,A_d$ and let $B$ be the set of binary $d$-tuples representing $H$.  For each vertex $\vec{v}=\left[v_1,v_2,\ldots,v_n\right]$ in $Q_n$ we let $\vec{v}(A_i)$ equal 0 or 1 according to $\ds\vec{v}(A_i)\equiv \sum_{j\in A_i}v_j \mod 2$.  We put $\vec{v}$ in $S$ if and only if the $d$-tuple $\left( \vec{v}(A_j) \right)_{j\in[d]}$ is in $B$.  For example, for a perfect 8-cycle in $Q_4$, we could have $B=\{0000,\ab 1000,\ab 1100,\ab 1110,\ab 1111,\ab 0111,\ab 0011,\ab 0001\}$ and $\vec{v}$ would be in $S$ if and only if its number of 1s in coordinates in $A_1,A_2,A_3,A_4$ is either even,even,even,even, or odd,even,even,even, and so on.  We observe that if a sub-$d$-cube has one coordinate in each of $A_1,A_2,\ldots,A_d$, then it will contain an exact copy of $H$. By taking an equipartition of $[n]$, we find the following lower bound:
	
	\begin{prop}\label{mindensity}
		$\ds \pi(H,d)\geq \frac{d!}{d^d}$ for all configurations $H$ in $Q_d$ for all positive integers $d$.
	\end{prop}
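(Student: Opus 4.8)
The plan is to make precise the construction already sketched in the paragraph preceding the statement and then count. The construction partitions $[n]$ into $d$ parts $A_1,\dots,A_d$, fixes the set $B\subseteq\{0,1\}^d$ of binary $d$-tuples representing $H$, and defines $S\subseteq V(Q_n)$ by putting $\vec v\in S$ if and only if $\bigl(\vec v(A_1),\dots,\vec v(A_d)\bigr)\in B$, where $\vec v(A_i)$ is the parity of the sum of the coordinates of $\vec v$ in $A_i$. The key structural observation, which I would state as a short lemma or just verify inline, is that if a sub-$d$-cube $R$ of $Q_n$ has its $d$ varying coordinates lying one in each of $A_1,\dots,A_d$, then $S\cap R$ is an \emph{exact} copy of $H$: flipping the single varying coordinate in $A_i$ toggles $\vec v(A_i)$ and leaves all other $\vec v(A_j)$ unchanged, so the map $R\to Q_d$ sending a vertex of $R$ to $(\vec v(A_1),\dots,\vec v(A_d))$ is an isomorphism of cubes carrying $S\cap R$ exactly onto $B$. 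Hence every such $R$ is ``good.''

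**Next I would** count the fraction of good sub-$d$-cubes. A sub-$d$-cube of $Q_n$ is determined by a choice of $d$ varying coordinates together with a $0/1$ assignment to the remaining $n-d$ coordinates, giving $\binom{n}{d}2^{n-d}$ of them in total (as recalled in Section~1). The ones guaranteed good by the construction are those whose $d$ varying coordinates form a \emph{transversal} of the partition $A_1,\dots,A_d$. If the partition is an equipartition, say with $n=dm$ and $|A_i|=m$ for all $i$, the number of transversals is $m^d$, and each extends to a sub-$d$-cube in $2^{n-d}$ ways, so
\[
g(H,d,n,S)\ \geq\ \frac{m^d\,2^{n-d}}{\binom{n}{d}2^{n-d}}\ =\ \frac{m^d}{\binom{dm}{d}}.
\]
Therefore $\ex(H,d,n)\geq m^d/\binom{dm}{d}$ for $n=dm$, and since $\ex(H,d,n)$ is nonincreasing in $n$ (established in Section~1), passing to the limit gives $\pi(H,d)\geq\lim_{m\to\infty} m^d/\binom{dm}{d}$.

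**Finally** I would evaluate the limit: $\binom{dm}{d}=\dfrac{(dm)(dm-1)\cdots(dm-d+1)}{d!}$, which is a polynomial in $m$ of degree $d$ with leading coefficient $d^d/d!$, so $m^d/\binom{dm}{d}\to d!/d^d$ as $m\to\infty$. This yields $\pi(H,d)\geq d!/d^d$, completing the proof. (If one prefers to avoid restricting to $n$ divisible by $d$, one can instead take a near-equipartition with part sizes differing by at most one; the transversal count is $\prod_i|A_i|$, which is still $n^d/d^d + O(n^{d-1})$, giving the same limit.)

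**The only point needing care** — and it is the crux of the argument rather than a real obstacle — is the verification that the transversal condition forces $S\cap R$ to be an exact copy of $H$ and not merely an isomorphic-as-graphs copy; this is exactly where the parity encoding does its work, since toggling one coordinate in $A_i$ changes precisely the $i$-th parity. Everything else is the elementary asymptotic count above, and the monotonicity of $\ex(H,d,n)$ that lets us take $n\to\infty$ is already in hand from Section~1.
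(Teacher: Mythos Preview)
Your proposal is correct and follows exactly the same approach as the paper: the blow-up construction via parity over an equipartition of $[n]$ into $d$ parts, with the transversal count giving the fraction $m^d/\binom{dm}{d}\to d!/d^d$. If anything, you supply more detail than the paper does, in particular the explicit verification that the coordinate-wise parity map $R\to Q_d$ is a cube automorphism carrying $S\cap R$ onto $B$, which is what guarantees an \emph{exact} copy rather than merely a graph-isomorphic one.
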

	
	We call a set $S$ constructed in this way a \emph{blow-up of $H$}.  This notion of blow-up is clearly related to, but not the same as, the blow-up of a graph $G$ (for one thing a blow-up of a graph has one part for each vertex, whereas a blow up of a configuration in $Q_d$ has $d$ parts).  In $Q_2$, the only configuration $H$ for which equality holds in Proposition \ref{mindensity} is two adjacent vertices.  The smallest upper bound for any of the 22 possible configurations in $Q_3$ as computed by Rahil Baber using flag algebras is $.3048$ (when $H$ is two adjacent vertices in $Q_3$), so it is highly unlikely that any configuration in $Q_3$ has 3-cube density equal to $\frac{2}{9}$, the lower bound provided by Proposition \ref{mindensity}.  Of the 238 possible configurations in $Q_4$, only three have flag algebra calculated upper bound 4-cube densities less than $.1$: one is the perfect 8-cycle, for which Theorem \ref{PC8theorem} says the exact value is $\frac{3}{32}=.09375$ and another is a graph theoretic, but not perfect, induced 8-cycle, with flag algebra 4-cube density upper bound $.094205$.  So there seems to be something special about the perfect 8-cycle.
	
	For the perfect path $P_{d+1}$ in $Q_d$ it turns out that a blow-up of $C_{2d+2}$ gives a better lower bound than that provided by Proposition \ref{mindensity}:
	
	\begin{prop}\label{path_density}
		$\ds\pi(P_{d+1},d)\geq\frac{d!}{(d+1)^{d-1}}$ for all positive integers $d$.
		
		\begin{proof}
			Let $S$ be a blow up of $C_{2d+2}$.  That is we partition $[n]$ into $A_1,\ab A_2,\ldots,\ab A_{d+1}$ and let $B$ be the set of binary $(d+1)$-tuples in a copy of $C_{2d+2}$.
			
			For each vertex $\vec{v}=[v_1,\ldots,v_n]$ in $Q_n$, we let $\vec{v}(A_i)$ equal 1 or 0 according to $\ds\vec{v}(A_i)\equiv \sum_{j\in A_i}v_j \mod 2$.  We put $\vec{v}$ in $S$ if and only if the $(d+1)$-tuple $\left( \vec{v}(A_j) \right)_{j\in[d+1]}$ is in $B$.  If a sub-$d$-cube has one coordinate in each of $d$ parts (and none in the other), then it will contain an exact copy of $P_{d+1}$.  For example, if $d=3$ and $B=\{0000,\ab 1000,\ab 1100,\ab 1110,\ab 1111,\ab 0111,\ab 0011,\ab 0001\}$ and we select a sub-3-cube with one coordinate in each of $A_1,A_2,$ and $A_4$(so each coordinate in $A_3$ is fixed) then if $\vec{v}(A_3)=0$ the 4-tuples $0001, \ab 0000,\ab 1000,\ab 1100$ in $B$ give us an exact copy of $P_4$ in any such sub-3-cube, while if $\vec{v}(A_3)=1$, then $1110,\ab 1111,\ab 0111,\ab 0011$ does the same.  If it is an equipartition, selecting the coordinates of the sub-$d$-cube one-by-one shows that
			\[
				\pi(P_{d+1},d)\geq \frac{(d+1)!}{(d+1)^{d}} = \frac{d!}{(d+1)^{d-1}}.
			\]
		\end{proof}
	\end{prop}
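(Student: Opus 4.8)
The plan is to exhibit, for each $n$ of a suitable form, a set $S\subseteq V(Q_n)$ whose density of good sub-$d$-cubes (those meeting $S$ in an exact copy of $P_{d+1}$) tends to $\frac{d!}{(d+1)^{d-1}}$, and then to read off the bound on $\pi$ from the fact that $\ex(P_{d+1},d,n)$ is nonincreasing in $n$, hence convergent, and dominates $g(P_{d+1},d,n,S)$ for every $S$. The construction I would use is a blow-up not of $P_{d+1}$ but of the perfect cycle $C_{2d+2}\subseteq Q_{d+1}$; the extra coordinate is exactly what pushes the bound above the $\frac{d!}{d^d}$ of Proposition~\ref{mindensity}.

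Concretely, take $n=(d+1)m$, equipartition $[n]$ into parts $A_1,\dots,A_{d+1}$ of size $m$, fix the vertex set $B\subseteq\{0,1\}^{d+1}$ of a perfect $(2d+2)$-cycle (for concreteness a Gray-code cycle $\emptyset,1,12,\dots,[d{+}1],[d{+}1]\setminus 1,\dots,d{+}1$), and put $\vec v\in S$ exactly when $\big(\vec v(A_1),\dots,\vec v(A_{d+1})\big)\in B$, where $\vec v(A_i)\equiv\sum_{j\in A_i}v_j\pmod 2$.

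The main step is to describe $S\cap R$ for a sub-$d$-cube $R$ whose $d$ free coordinates lie one in each of $d$ distinct parts, the omitted part being $A_{i_0}$. On $R$ the map $\vec v\mapsto\big(\vec v(A_{i_1}),\dots,\vec v(A_{i_d})\big)$ is an affine isomorphism onto $\{0,1\}^d$ while $\vec v(A_{i_0})$ is a constant bit $b$, so $S\cap R$ is an automorphic image in $Q_d$ of the link of $B$ in coordinate $i_0$ at level $b$: delete coordinate $i_0$ from those vertices of the cycle whose $i_0$-entry equals $b$. The one thing that needs checking is that this link is always a \emph{perfect} path. In a perfect cycle each coordinate is flipped exactly twice going around (each of the two antipodal arcs has $d+1$ edges and net flips all $d+1$ coordinates, so flips each once), hence the vertices at level $b$ in coordinate $i_0$ form a cyclic run of exactly $d+1$ consecutive cycle vertices --- a path in $Q_d$ --- whose two endpoints are one cycle step short of being antipodal, hence at Hamming distance $d$ once coordinate $i_0$ (which they share) is deleted. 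That is precisely the perfectness condition, and it is the only place where ``exact copy'' rather than ``isomorphic copy'' does any work; everything else is bookkeeping.

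Finally, a sub-$d$-cube is a choice of $d$ varying coordinates together with a $0/1$ setting of the remaining $n-d$; of the $\binom{(d+1)m}{d}$ choices of varying coordinates, exactly $(d+1)m^d$ place them in $d$ distinct parts (choose the omitted part, then one coordinate from each of the other $d$ parts), and each resulting $R$ is good. Thus $g(P_{d+1},d,n,S)\ge (d+1)m^d\big/\binom{(d+1)m}{d}\to\frac{(d+1)\,d!}{(d+1)^d}=\frac{d!}{(d+1)^{d-1}}$ as $m\to\infty$. Since $\ex(P_{d+1},d,n)\ge g(P_{d+1},d,n,S)$ and $\ex(P_{d+1},d,n)$ converges, letting $n=(d+1)m\to\infty$ yields $\pi(P_{d+1},d)\ge\frac{d!}{(d+1)^{d-1}}$ (for $n$ not a multiple of $d+1$, a near-equipartition changes the count only in lower-order terms).
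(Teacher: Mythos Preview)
Your argument is correct and follows the same construction as the paper's own proof: a blow-up of $C_{2d+2}$ with an equipartition of $[n]$ into $d+1$ parts, counting the sub-$d$-cubes whose free coordinates hit $d$ distinct parts. The paper merely asserts (with a $d=3$ example) that such a sub-$d$-cube meets $S$ in an exact copy of $P_{d+1}$; you supply the general verification --- that each coordinate is flipped exactly twice around a perfect cycle, so the level-$b$ slice in coordinate $i_0$ is a run of $d+1$ consecutive cycle vertices whose endpoints project to antipodes in $Q_d$ --- which is a welcome addition but not a different route.
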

	
% section constructions (end)

\section{Local density, perfect cycles, and sequences} % (fold)
\label{sec:local_density_perfect_cycles_and_sequences}
	
	Let $H$ be a configuration in $Q_d$ and $S$ be a subset of $V(Q_n)$.  For each vertex $v$ in $S$, we let $\Gin(H,d,n,S)$ be the number of sub-$d$-cubes $R$ of $Q_n$ containing $v$ in which $S\cap R$ is an exact copy of $H$, $\Gmaxin(H,d,n)=\max_{v\in S} \Gin(H,d,n,S)$ where the max is over all $v$ and $S$ such that $v\in S$, $\gin(H,d,n,S)=\frac{\Gin(H,d,n,S)}{\binom{n}{d}}$ denote the fraction of sub-$d$-cubes $R$ of $Q_n$ containing $v$ in which $S\cap R$ is an exact copy of $H$, and $\exlin(H,d,n)=\frac{\Gmaxin(H,d,n)}{\binom{n}{d}}$.  As with $\ex(H,d,n)$, a simple averaging argument shows that $\exlin(H,d,n)$ is a nonincreasing function of $n$, so we define $\pilin(H,d)$ by
	\[
		\pilin(H,d)=\lim_{n \to \infty}\exlin(H,d,n)
	\]
	For each vertex $v\not\in H$ we perform a similar procedure to define $\Gout(H,d,n,S)$, $\Gmaxout(H,d,n)$, $\gout(H,d,n,s)$, and $\pilout(H,d)$.  So $\pilin(H,d)$ and $\pilout(H,d)$ are the maximum local densities of sub-$d$-cubes with an exact copy of $H$ among all sub-$d$-cubes containing $v$ in $S$ and out of $S$ respectively.  Finally, we define $\pil(H,d)$ to be $\max\{\pilin(H,d),\pilout(H,d)\}$.  Since the global density cannot be more than the maximum local density, we must have $\pi(H,d)\leq \pil(H,d)$.

	% let $G_v(H,d,n,S)$ be the number of sub-$d$-cubes $R$ of $Q_n$ containing $v$ in which $S\cap R$ is an exact copy of $H$, $\exv(H,d,n)$ be the maximum of $G_v(H,d,n,S)$ over all subsets $S$ of $V(Q_n)$, $\exl(H,d,n)$ denote the maximum of $\exv(H,d,n)$ over all $v\in V(Q_n)$ and let
	% \[
% 		\gl(H,d,n)=\frac{\exl(H,d,n)}{\binom{n}{d}}.
% 	\]
% 	So $\gl(H,d,n)$ is the maximum fraction of sub-$d$-cubes at any vertex in $S$ which can contain an exact copy of $H$.  As before, a simple averaging argument shows that $\gl(H,d,n)$ is a nonincreasing function of $n$, so we let $\ds\pil(H,d)=\lim_{n\to\infty}\gl(H,d,n)$ and call $\pil(H,d)$ the maximum local $d$-cube density of $H$.  Since the overall maximum fraction of ``good'' sub-$d$-cubes cannot be more than the maximum fraction at every vertex in $S$, we have $\pi(H,d)\leq \pil(H,d)$.
	
	For most configurations $H$ for which we have been able to determine $\pi(H,d)$, our procedure has been to prove an upper bound for $\pil(H,d)$ which matches the density of a construction.
%	\subsection{Related sequences}
	
	If $S\subseteq V(Q_n)$ we let $\overline{S}$ denote $V(Q_n)\setminus S$ and if $H$ is a configuration in $Q_d$ we let $\overline{H}$ denote $V(Q_d)\setminus H$.  Clearly $\pi(H,d)=\pi(\overline{H},d)$ and $\pilin(H,d)=\pilout(\overline{H},d)$.  If $H$ is self-complementary in $Q_d$, i.e. $\overline{H}$ is an exact copy of $H$, then $\pilin(H,d)=\pilout(\overline{H},d)=\pilout(H,d)=\pil(H,d)$.  Every configuration in $Q_3$ with 4 vertices is self-complementary, including $P_4$, and it is easy to check that $C_8$ is self-complementary in $Q_4$ (the complements of the two non-perfect induced 8-cycles in $Q_4$ are not 8-cycles).
	
	Now we pose and solve a different maximization problem whose answer we will show to be $\pi(C_8,4)$.  Let $S$ be a set of size $n$ and $d$ a positive integer.  We consider a set $A(d,n)$ of sequences of $d$ distinct elements of $S$.  Given a sequence $w$ in $A(d,n)$ an \emph{end-segment of w} is the set of the first $j$ elements of $w$ or the set of the last $j$ elements of $w$, for some $j$ in $[1,d)$.  We say the set $A(d,n)$ has Property $U$ if the two following conditions are satisfied:
	\begin{enumerate}
		\item For each pair of sequences $w$ and $x$ in $A(d,n)$, if $D$ is an end-segment of $w$ and all elements of $D$ are in the sequence $x$, then $D$ is an end-segment of $x$ with elements in the same order as in $w$ (so if $abc$ is the beginning of $w$, and $a,b,$ and $c$ all appear in $x$, then either $x$ begins $abc$ or ends $cba$).
		\item A sequence and its reversal are not both in $A(d,n)$ (unless $d=1$).
	\end{enumerate}
	
	For example, if $x$ and $w$ are sequences in a set $A(5,n)$ with Property $U$ and if $x$ is $abcde$, then $w$ cannot be $abceg$ (or its reversal), $abegh$ (or its reversal), or $ghiaj$ (or its reversal), but could be $fbdcg$ (or its reversal) or $edgbh$ (or its reversal).  It is easy to see that no two sequences in $A(d,n)$ can have the same set of $d$ elements.
	
	Let $T(d,n)$ denote the maximum size of a family $A(d,n)$ with Property $U$.
	
	\begin{prop}
		$\Gmaxin(C_{2d},d,n)=T(d,n)$ for all $d\geq 2$.
		\begin{proof}
			Without loss of generality, we can assume that $\emptyset$ is a vertex where the local $d$-cube density of $C_{2d}$ is a maximum and that $\Gein(C_{2d},d,n,S)=\Gmaxin(C_{2d},d,n)$.  Now we construct a set $A(d,n)$ of $d$-sequences.
			
			The sequence $a_1,a_2,\ldots,a_d$ or its reversal is in $A(d,n)$ if and only if the intersection of $S$ and the sub-$d$-cube where $a_1,a_2,\ldots,a_d$ are the nonconstant coordinates is equal to $\{\emptyset,\ab a_1,\ab a_1 a_2,\ab a_1 a_2 a_3,\ldots,\ab a_1 a_2\cdots a_k,\ab a_2 a_3\cdots a_d,\ldots,\ab a_{d-1}a_d,\ab a_d\}$.  We claim that $A(d,n)$ has Property $U$.
			
			Suppose it does not, say $x$ and $w$ are sequences in $A(d,n)$ with $a_1 a_2 \cdots a_j$ an end-segment in $x$ all of whose elements are in $w$ but not an end-segment in $w=b_1 b_2\cdots b_k$.  Then $\{a_1,\ab a_2,\ldots,\ab a_j\}$ is a subset of $\{b_1,\ab b_2,\ldots,\ab b_k\}$, so is another vertex in $S$ which is in the sub-$d$-cube containing the perfect $2d$-cycle $\{\emptyset,\ab b_1,\ab b_1 b_2,\ldots,\ab b_{d-1}b_d,\ab b_d\}$, a contradiction.
			
			Similarly, by reversing the procedure, a family of sequences with Property $U$ and size $T(d,n)$ can be used to construct $T(d,n)$ sub-$d$-cubes containing $\emptyset$ with exact copies of $C_{2d}$.
		\end{proof}
	\end{prop}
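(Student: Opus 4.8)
The plan is to make the correspondence in the statement completely explicit: good sub-$d$-cubes through a fixed vertex will biject with $d$-sequences (up to reversal), and the job is to check that Property $U$ is exactly what survives the translation, in both directions. First I would use vertex-transitivity of $Q_n$ to assume that the maximizing pair is $(\emptyset,S)$, so that $\Gmaxin(C_{2d},d,n)=\Gein(C_{2d},d,n,S)$ for a fixed $S$ with $\emptyset\in S$. The basic geometric fact I would record is this: a sub-$d$-cube $R$ containing $\emptyset$ is determined by the $d$-set $\{a_1,\ldots,a_d\}$ of its nonconstant coordinates (its vertices being all subsets of that set), and a perfect $2d$-cycle through $\emptyset$ inside $R$ is precisely $\{\emptyset,\{a_1\},\{a_1,a_2\},\ldots,\{a_1,\ldots,a_d\},\{a_2,\ldots,a_d\},\ldots,\{a_d\}\}$ for some ordering $a_1,\ldots,a_d$ of those coordinates, an ordering and its reversal giving the same cycle. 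In particular the cycle vertices of size $i$, for $1\le i\le d-1$, are exactly the initial union $\{a_1,\ldots,a_i\}$ and the terminal union $\{a_{d-i+1},\ldots,a_d\}$.

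Using this, I declare the sequence $a_1\cdots a_d$ (exactly one of it and its reversal) to lie in $A(d,n)$ precisely when $S\cap R$ equals the cycle displayed above. Since every good cube through $\emptyset$ contains $\emptyset$ in its intersection with $S$ and that intersection is a perfect $2d$-cycle, distinct good cubes correspond to distinct sequences-up-to-reversal and conversely; hence $|A(d,n)|=\Gein(C_{2d},d,n,S)=\Gmaxin(C_{2d},d,n)$, and Condition 2 of Property $U$ holds by construction. So once $A(d,n)$ is shown to have Property $U$ we get $\Gmaxin(C_{2d},d,n)\le T(d,n)$.

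The substance is Condition 1. Suppose $w=b_1\cdots b_d$ and $x\in A(d,n)$, and $a_1\cdots a_j$ is an initial segment of $x$ with $1\le j\le d-1$, all of whose letters occur in $w$. Then each of the nested sets $\emptyset\subset\{a_1\}\subset\{a_1,a_2\}\subset\cdots\subset\{a_1,\ldots,a_j\}$ is a vertex of the cube $R_w$ (being contained in $\{b_1,\ldots,b_d\}$) and lies in $S$ (being a cycle vertex of $S\cap R_x$), so each is one of the $2d$ cycle vertices of $S\cap R_w$; that is, $\{a_1,\ldots,a_i\}$ is $\{b_1,\ldots,b_i\}$ or $\{b_{d-i+1},\ldots,b_d\}$ for every $i\le j$. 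An induction along this chain, using that an initial union $\{b_1,\ldots,b_p\}$ and a terminal union $\{b_{d-i+1},\ldots,b_d\}$ with $i<d$ are never nested, forces either $\{a_1,\ldots,a_i\}=\{b_1,\ldots,b_i\}$ for all $i\le j$ (so $a_i=b_i$ and $w$ begins $a_1\cdots a_j$) or $\{a_1,\ldots,a_i\}=\{b_{d-i+1},\ldots,b_d\}$ for all $i\le j$ (so $w$ ends $a_j\cdots a_1$); either way $a_1\cdots a_j$ is an end-segment of $w$ in the required order, and the terminal-segment case is identical. I expect this to be the main obstacle: the cycle structure only hands one the correct \emph{underlying set} for free, and it is the chain of nested cycle vertices, together with the non-nesting of initial and terminal unions, that upgrades this to the correct \emph{order}.

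For the reverse inequality, given $A(d,n)$ with Property $U$ and $|A(d,n)|=T(d,n)$, I let $S$ be the union over all $w=a_1\cdots a_d\in A(d,n)$ of the $2d$ vertices $\emptyset,\{a_1\},\ldots,\{a_1,\ldots,a_d\},\{a_2,\ldots,a_d\},\ldots,\{a_d\}$. Then $\emptyset\in S$, and each cube $R_w$ contains that perfect cycle; I must check $S\cap R_w$ contains nothing more. If $v\in S\cap R_w$ comes from some $x\in A(d,n)$, then $v$ is $\emptyset$, the full set of $x$, or an end-segment union of $x$; in the first two cases $v$ is already a cycle vertex of $R_w$, and in the last case $v\subseteq\{a_1,\ldots,a_d\}$ means all letters of that end-segment occur in $w$, so Condition 1 of Property $U$ makes $v$ an initial or terminal union of $w$, i.e.\ a cycle vertex of $R_w$ again. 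Finally, Condition 2 together with the easy fact that distinct sequences in $A(d,n)$ have distinct letter-sets makes the cubes $R_w$ pairwise distinct, so $\Gein(C_{2d},d,n,S)\ge|A(d,n)|=T(d,n)$ and hence $\Gmaxin(C_{2d},d,n)\ge T(d,n)$. Combining the two inequalities gives the equality; the degenerate case $d=2$, where a ``perfect $4$-cycle'' is all of $Q_2$ and Condition 1 is vacuous, is immediate.
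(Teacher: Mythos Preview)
Your proof is correct and follows essentially the same bijection as the paper: perfect $2d$-cycles through $\emptyset$ correspond to $d$-sequences up to reversal, and Property $U$ is exactly the condition that $S\cap R_w$ contains no extra vertices. You are in fact more careful than the paper, which only checks that the \emph{set} $\{a_1,\ldots,a_j\}$ must be a cycle vertex of the $w$-cube and leaves both the order-preservation and the reverse construction as implicit; your chain-of-nested-sets argument and explicit verification that $S\cap R_w$ equals the $w$-cycle fill those gaps cleanly.
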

	
	We define $t(d,n)$ to be $\frac{T(d,n)}{\binom{n}{d}}$.  Hence $t(d,n)=\frac{\Gmaxin(C_{2d},d,n)}{\binom{n}{d}}=\exlin(C_{2d},d,n)$ is a nonincreasing function of $n$, so we can define $t(d)$ by $t(d)=\lim_{n \to \infty}t(d,n)=\lim_{n \to \infty}\exlin(C_{2d},d,n)=\pilin(C_{2d},d)$.  Hence we have
	
		\begin{prop}\label{pilin=td}
			For all $d\geq 2$
			\[
				\pilin(C_{2d},d)=t(d)
			\]
		\end{prop}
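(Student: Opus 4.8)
The plan is to chain together the equalities that have already been assembled in the paragraph immediately preceding the statement, so that the argument amounts to bookkeeping once the monotonicity of $\exlin(C_{2d},d,n)$ in $n$ is confirmed. First I would invoke the preceding proposition, which asserts $\Gmaxin(C_{2d},d,n)=T(d,n)$ for every $d\geq 2$ and every $n\geq d$. Dividing both sides by $\binom{n}{d}$ and recalling the definitions $\exlin(C_{2d},d,n)=\Gmaxin(C_{2d},d,n)/\binom{n}{d}$ and $t(d,n)=T(d,n)/\binom{n}{d}$ gives $\exlin(C_{2d},d,n)=t(d,n)$ for all $n\geq d$.

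Second, I would spell out why $\exlin(C_{2d},d,n)$ is a nonincreasing function of $n$, by the same averaging argument used for $\ex(H,d,n)$ in Section~\ref{sec:background}. Fix $v\in S\subseteq V(Q_n)$ with $\Gin(C_{2d},d,n,S)=\Gmaxin(C_{2d},d,n)$. Of the $2n$ sub-$(n-1)$-cubes of $Q_n$, exactly $n$ contain $v$ (those obtained by fixing one coordinate to its value at $v$); each such sub-$(n-1)$-cube contains $\binom{n-1}{d}$ sub-$d$-cubes through $v$, and each sub-$d$-cube through $v$ lies in exactly $n-d$ of them (namely those that fix one of its $n-d$ non-free coordinates). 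Hence, letting $S_j$ denote the intersection of $S$ with the $j$-th such sub-$(n-1)$-cube, summing the local counts yields $(n-d)\,\Gmaxin(C_{2d},d,n)=\sum_j \Gin(C_{2d},d,n-1,S_j)\le n\,\Gmaxin(C_{2d},d,n-1)$. Dividing by $\binom{n}{d}=\tfrac{n}{n-d}\binom{n-1}{d}$ gives $\exlin(C_{2d},d,n)\le \exlin(C_{2d},d,n-1)$.

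Consequently both limits $t(d)=\lim_{n\to\infty}t(d,n)$ and $\pilin(C_{2d},d)=\lim_{n\to\infty}\exlin(C_{2d},d,n)$ exist, and since the two sequences agree term by term for all $n\geq d$, they have the same limit, which is $\pilin(C_{2d},d)=t(d)$. I do not expect any genuine obstacle here: the statement is essentially a formal consequence of the preceding proposition together with the definitions, and the only point that is not purely clerical is the monotonicity step, which is a direct transcription of the averaging argument already established for the global density.
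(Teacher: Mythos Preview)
Your proposal is correct and follows essentially the same route as the paper: the proposition is recorded as an immediate consequence of the preceding proposition $\Gmaxin(C_{2d},d,n)=T(d,n)$ after dividing by $\binom{n}{d}$ and passing to the limit, with the monotonicity of $\exlin$ in $n$ already noted earlier via the same averaging argument you spell out. The only difference is that you write out the averaging/double-counting for monotonicity explicitly, whereas the paper merely cites it as analogous to the global case.
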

		
		We now calculate $t(3)$.
	
		Let $A(3,n)$ be a set of 3-sequences with Property $U$.  No symbol can appear at the end in one sequence and in the middle of another, so we let $A$ be the set of symbols which appear at the beginning or end and $B$ be the set of symbols which appear in the middle.  If $\abs{A}=m$ and $\abs{B}=p$, then, since a sequence and its reversal cannot both be in $A(3,n)$, the total number of sequences is at most $\binom{m}{2}p=\frac{(n-m)m(m-1)}{2}$ which is maximized when $m=\left\lceil{\frac{2n}{3}}\right\rceil$.  Hence,
		\[
			\pilin(C_6,3) = t(3) = \lim_{n\to\infty} \frac{\left( \frac{2}{3}n \right)^2 \left( \frac{1}{3}n \right)}{2\binom{n}{3}}=\frac{4}{9}.
		\]
		
		To find $\pilout(C_6,3)$, we just note that if $S$ is the set of all vertices in $Q_n$ with weight not divisible by 3, then every $Q_3$ containing $\vv{0}$ has an exact copy of $C_6$ (the unique vertex with weight 3 in each $Q_3$ containing $\phi$ is also not in $C_6$), so $\pilout(C_6,3)=1$.  Using this same set $S$, it is not hard to show that $\pi(C_6,3)\geq \frac{1}{3}$ (any $Q_3$ whose smallest weight vertex is a multiple of 3 has an exact copy of $C_6$).  We have been unable to show equality, but Baber's flag algebra upper bound of $.3333333336$ would seem to show equality must hold.
	
	% \begin{theorem}\label{C8inQ4}
	% 	$\ds \pi(C_8,4)=\frac{3}{32}$.
	% \end{theorem}
	
	To prove Theorem \ref{PC8theorem}, we will prove a result about inducibility in bipartite graphs.
	
	\begin{theorem}\label{bipartite_max_4_set_matching}
		Let $G$ be a bipartite graph with $n$ vertices.  Then the limit as $n$ goes to infinity of the maximum fraction of sets of 4 vertices of $G$ which induce two disjoint edges is equal to $\frac{3}{32}$.
		\begin{proof}
			Suppose $M,P$ is a bipartition of $V(G)$ where $\abs{M}=m$ and $\abs{P}=p$.  Let $\set{u_1,u_2,\ldots,u_m}$ and $\set{v_1,v_2,\ldots,v_p}$ be the vertices of $M$ and $P$ with respective degrees $r_1,r_2,\ldots,r_m$ and $c_1,c_2,\ldots,c_p$.  For $i\neq j$, let $t_{i,j}$ denote the number of vertices in $P$ which are adjacent to both $u_i$ and $u_j$.  Hence the total number of ``good'' sets of 4 vertices is
			\[
				N=\sum_{i<j} (r_i-t_{i,j})(r_j-t_{i,j})
			\]
			where the sum is over all pairs $i,j$ such that $1\leq i<j\leq m$.  To get an upper bound for this we first get an upper bound on the sum $S$ of all pairs of the factors in the products:
			\[
				S = \sum_{i<j}\left[ (r_i-t_{i,j}) + (r_j-t_{i,j})\right] = (m-1)\sum_{i=1}^m r_i - 2\sum_{i=1}^p \binom{c_j}{2}.
			\]
			This is because each $r_i$ appears in a sum with each $r_j$ where $j\neq i$ and because $\sum_{i=1}^m t_{i,j}=\binom{c_j}{2}$ since each pair of edges adjacent to $v_j$ is counted precisely once in the sum.

			Let $w=\sum_{i=1}^m r_i = \sum_{j=1}^p c_j$.  Then
			\begin{align*}
				S	&= (m-1)\sum_{i=1}^m r_i - \sum_{j=1}^p c_j^2 + \sum_{j=1}^p c_j\\
					&= mw-\sum_{j=1}^p c_j^2\\
					&\leq mw-\sum_{j=1}^p\left( \frac{w}{p} \right)^2\\
					&= mw - \frac{w^2}{p}
			\end{align*}
			where the inequality is by Cauchy-Schwartz.

			The function $f(w)=mw-\frac{w^2}{p}$ is maximized when $w=\frac{mp}{2}$, so $S\leq \frac{m^2p}{4}$.

			Now, we return to our consideration of $N=\sum_{i<j} (r_i-t_{i,j})(r_j-t_{i,j})$.

			The product $(r_i-t_{i,j})(r_j-t_{i,j})$ is at most $\left(\frac{p}{2}\right)^2$, achieved when $r_i=r_j=\frac{p}{2}$ and $t_{i,j}=0$, in which case $(r_i-t_{i,j})+(r_j-t_{i,j})=p$.  Since $S\leq\frac{m^2p}{4}$, to maximize $N$ we want to have $\frac{m^2}{4}$ products of $\left( \frac{p}{2} \right)^2$, with all other products being $0\cdot 0$.  Hence $N\leq \frac{m^2p^2}{16}$.  Since $m+p=n$, this is maximized when $m=p=\frac{n}{2}$, so $N\leq\frac{n^4}{256}$.  It is not hard to conclude from the above inequalities that equality holds only when $G$ is two disjoint copies of $K_{\frac{n}{4},\frac{n}{4}}$. This is the same graph which maximizes, among all graphs with $n$ vertices, the number of induced subgraphs with 4 vertices consisting of two edges which share a vertex and an isolated vertex.

			This gives a fraction of ``good'' sets of 4 vertices as 
			\[
				\frac{\frac{n^4}{256}}{\binom{n}{4}} = \frac{n^3}{(n-1)(n-2)(n-3)}\cdot \frac{3}{32}.
			\]
		\end{proof}
	\end{theorem}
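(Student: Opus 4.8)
The plan is to prove matching asymptotic lower and upper bounds, both equal to $\frac{3}{32}$, with the extremal configuration being the disjoint union of two balanced complete bipartite graphs. For the lower bound I would take $G$ to be two vertex-disjoint copies $C_1,C_2$ of $K_{n/4,n/4}$ (up to rounding when $n$ is not divisible by $4$). Inside a single complete bipartite graph, any three vertices split $2$--$1$ or $3$--$0$ across the two sides and so induce $K_{1,2}$ or an empty graph, and any four vertices induce $K_{1,3}$, $C_4$, or an empty graph; none of these is $2K_2$. Hence every $4$-set inducing $2K_2$ must consist of two vertices of $C_1$ spanning an edge together with two vertices of $C_2$ spanning an edge, and each copy has $(n/4)^2$ edges. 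This gives $(n/4)^2\cdot(n/4)^2=n^4/256$ good sets, so the fraction is $\frac{n^4/256}{\binom{n}{4}}\to\frac{24}{256}=\frac{3}{32}$.

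For the upper bound, fix a bipartition $(M,P)$ of $V(G)$ with $|M|=m$, $|P|=p$, $m+p=n$. Since $G$ is bipartite, every $4$-set inducing $2K_2$ has exactly two vertices in $M$ and two in $P$, the two induced edges being two of the four ``crossing'' pairs. Writing $r_1,\dots,r_m$ and $c_1,\dots,c_p$ for the degree sequences of $M$ and $P$, and $t_{i,j}$ for the number of common neighbours of the $i$-th and $j$-th vertices of $M$, a good $4$-set carried by $\{u_i,u_j\}$ is obtained by choosing a neighbour of $u_i$ that is not a neighbour of $u_j$ and a neighbour of $u_j$ that is not a neighbour of $u_i$, so the number of good sets is
\[
 N=\sum_{i<j}(r_i-t_{i,j})(r_j-t_{i,j}).
\]
The first step would be to bound the sum $S$ of the \emph{linear} factors. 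Using the double-counting identity $\sum_{i<j}t_{i,j}=\sum_{j}\binom{c_j}{2}$ and writing $w=\sum_i r_i=\sum_j c_j$ for the number of edges, one obtains $S=\sum_{i<j}\big[(r_i-t_{i,j})+(r_j-t_{i,j})\big]=(m-1)w-2\sum_j\binom{c_j}{2}=mw-\sum_j c_j^2$, and then Cauchy--Schwarz ($\sum_j c_j^2\ge w^2/p$) followed by optimizing the resulting downward parabola in $w$ (maximum at $w=mp/2$) gives $S\le mw-\frac{w^2}{p}\le\frac{m^2p}{4}$.

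The second step converts this into a bound on $N$. For a fixed pair, the neighbours of $u_i$ missed by $u_j$ and the neighbours of $u_j$ missed by $u_i$ are disjoint subsets of $P$, so $(r_i-t_{i,j})+(r_j-t_{i,j})\le p$; combining with AM--GM, $(r_i-t_{i,j})(r_j-t_{i,j})\le\big(\tfrac{(r_i-t_{i,j})+(r_j-t_{i,j})}{2}\big)^2\le\tfrac{p}{4}\big[(r_i-t_{i,j})+(r_j-t_{i,j})\big]$. Summing over $i<j$ gives $N\le\frac{p}{4}S\le\frac{m^2p^2}{16}=\frac{(mp)^2}{16}$, and since $mp\le(m+p)^2/4=n^2/4$ we get $N\le\frac{n^4}{256}$. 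Dividing by $\binom{n}{4}$ shows the maximum fraction is $\frac{3}{32}+o(1)$, matching the construction, which proves the limit equals $\frac{3}{32}$.

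The hardest part, as I see it, is the equality analysis that identifies the extremal graph: showing that saturating all the inequalities at once forces $m=p=\frac{n}{2}$, all $c_j$ (and all $r_i$) equal to $\frac{n}{4}$, and the common-neighbour counts $t_{i,j}$ to be $0$ on the ``used'' pairs and as large as possible otherwise, so that $G$ is exactly two disjoint copies of $K_{n/4,n/4}$. This refinement is only needed for the uniqueness statement and not for the numerical value; as a consistency check, the extremal graph is the same one that maximizes, over all $n$-vertex graphs, the number of induced $4$-vertex subgraphs isomorphic to a path $P_3$ plus an isolated vertex.
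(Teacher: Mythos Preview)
Your proof is correct and follows essentially the same route as the paper: the same formula $N=\sum_{i<j}(r_i-t_{i,j})(r_j-t_{i,j})$, the same bound $S\le m^2p/4$ via the double-count $\sum_{i<j}t_{i,j}=\sum_j\binom{c_j}{2}$, Cauchy--Schwarz, and optimization in $w$, and then $N\le m^2p^2/16\le n^4/256$. The one place you differ is in converting the bound on $S$ into a bound on $N$: the paper argues informally that the optimum packs $m^2/4$ products equal to $(p/2)^2$ with the rest zero, whereas you use the clean chain $(r_i-t_{i,j})(r_j-t_{i,j})\le\frac{p}{4}\big[(r_i-t_{i,j})+(r_j-t_{i,j})\big]$ (AM--GM together with the disjointness observation $(r_i-t_{i,j})+(r_j-t_{i,j})\le p$) and then sum; this is a strict improvement in rigor over the paper's phrasing of that step, and you also supply the explicit lower-bound construction that the paper leaves implicit.
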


	\begin{proof}[Proof of Theorem \ref{PC8theorem}]
		By Proposition \ref{mindensity}, $\pi(C_8,4)\geq\frac{3}{32}$.  Since $C_8$ is self-complementary in $Q_4$, $\pi(C_8,4)\leq\pil(C_8,4)=\pilin(C_8,4)=t(4)$ the last equality by Proposition \ref{pilin=td}.  So to complete the proof we just need to show that $t(4)\leq \frac{3}{32}$.

	Let $A(4,n)$ be a maximum size set of 4-sequences with Property $U$ with elements from an $n$-set.  Let $A=\{i,\in[n]:i \textrm{ is the first or last element in a sequence in }A(4,n)\}$ and let $B=[n]\setminus A$.  We construct a bipartite graph $G$ with vertex bipartition $A,B$.  If $a\in A$ and $b\in B$, then $[a,b]$ is an edge of $G$ if and only if $a$ and $b$ are consecutive elements in some sequence in $A(4,n)$ (so some sequence begins $ab$ or ends $ba$).
	
	Suppose $a_1 b_1 b_2 a_2$ is a sequence in $A(4,n)$.  Then $[a_1,b_1]$ and $[a_2,b_2]$ are edges of $G$.  Suppose $a_1 b_2$ (or $a_2 b_1$) is also an edge.  Then $a_1 b_2$ is an end-segment of some sequence in $A(4,n)$, which is impossible because $\{ a_1, b_2\}\subseteq \{ a_1, \ab b_1,\ab b_2,\ab a_2 \}$, but $a_1 b_2$ is not an end-segment in $a_1 b_1 b_2 a_2$.  Hence the size $T(4,n)$ of $A(4,n)$ is at most the number of sets of 4 vertices in $G$ which induce two disjoint edges, and by Theorem \ref{bipartite_max_4_set_matching}, 
	\[
		t(4)=\lim_{n\to\infty}\frac{T(4,n)}{\binom{n}{4}}\leq \frac{3}{32}.
	\]
	\end{proof}

	So Conjecture~\ref{cycleconj} is true for $d=4$.  Zongchen Chen \cite{Chen} has shown that $t(d)=\frac{d!}{d^d}$ for all $d\geq 4$.  While we know that $\pilin(C_{2d},d)=t(d)=\frac{d!}{d^d}$ for all $d\geq 4$, we have been unable to show that $\pilout(C_{2d},d)=\pilin(C_{2d},d)$ if $d\geq 5$ (our proof for $d=4$ used the fact that $C_8$ is self-complementary in $Q_4$), which would complete a proof of Conjecture 3.

	We have seen that equality in Conjecture \ref{cycleconj} does not hold for $d=3$ (since $\pi(C_6,3)\geq \frac{1}{3}$).

% section local_density_perfect_cycles_and_sequences (end)

\section{Perfect Paths} % (fold)
\label{sec:perfect_paths}

To determine the $d$-cube density of $P_4$ in $Q_3$, as mentioned in Section \ref{sec:local_density_perfect_cycles_and_sequences}, our procedure is to prove an upper bound for $\pil(P_4,3)$ which matches the density of the construction given in Proposition \ref{path_density}.  We will show

\begin{prop}\label{pilP_4,3}
	$\ds\pil(P_4,3)\leq \frac{3}{8}$.
\end{prop}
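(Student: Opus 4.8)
The plan is to exploit self-complementarity to pass to the ``in'' density, classify exactly which sub-$3$-cubes through a fixed vertex contain an exact copy of $P_4$, and then bound their number by a degree sum in an auxiliary bipartite graph. Since every $4$-vertex configuration in $Q_3$ — in particular $P_4$ — is self-complementary, $\pil(P_4,3)=\pilin(P_4,3)$, so it suffices to prove $\pilin(P_4,3)\le\frac38$. By vertex-transitivity of $Q_n$ we may assume the maximum local density is attained at the vertex $\emptyset\in S$, and we must bound the number of $3$-subsets $\{i,j,k\}\subseteq[n]$ for which $S\cap R$ is a perfect path, where $R$ is the sub-$3$-cube whose varying coordinates are $\{i,j,k\}$ (all other coordinates fixed at $0$). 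The first step is a direct enumeration in $Q_3$: there are exactly twelve perfect paths $P_4$ through $\emptyset$ in a $3$-cube on $\{i,j,k\}$, namely the six in which $\emptyset$ is an endpoint, each of the form $\{\emptyset,\{a\},\{a,b\},\{a,b,c\}\}$ for an ordering $(a,b,c)$ of $\{i,j,k\}$ (``type A''), and the six in which $\emptyset$ is an interior vertex, each of the form $\{\{x\},\emptyset,\{y\},\{y,z\}\}$ for an ordering $(x,y,z)$ of $\{i,j,k\}$ (``type B''). These are distinguished by the fact that a type-A set contains the weight-$3$ vertex while a type-B set does not.

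Next I would set up the bipartite reduction. Put $W_1=\{i\in[n]:\{i\}\in S\}$, $p=|W_1|$, $V_1=[n]\setminus W_1$, $q=n-p$, and let $E$ be the bipartite graph on $(W_1,V_1)$ with $uv\in E$ iff $\{u,v\}\in S$; write $r_u=\deg_E(u)$ for $u\in W_1$ and $c_v=\deg_E(v)$ for $v\in V_1$. Reading off the two configuration types: if $S\cap R$ is type A, then exactly one of $i,j,k$, namely $a$, lies in $W_1$, and since among the weight-$2$ vertices of $R$ only $\{a,b\}$ lies in $S$, the vertex $a$ is $E$-adjacent to exactly one of $b,c$; if $S\cap R$ is type B, then exactly one of $i,j,k$, namely $z$, lies in $V_1$, and since among the weight-$2$ vertices only $\{y,z\}$ lies in $S$, the vertex $z$ is $E$-adjacent to exactly one of $x,y$. (The remaining conditions — on the weight-$2$ vertex in the ``wrong'' part and on the weight-$3$ vertex — only further restrict matters and will not be needed.) Since type-A and type-B triples are disjoint families, the assignments ``type-A triple $\mapsto$ (its unique $W_1$-element, the remaining pair)'' and ``type-B triple $\mapsto$ (its unique $V_1$-element, the remaining pair)'' are injective with images inside $\{(u,P):u\in W_1,\ P\in\binom{V_1}{2},\ u\text{ adjacent to exactly one vertex of }P\}$ and its $V_1$-analogue, so the number of good sub-$3$-cubes is at most
\[
  \sum_{u\in W_1}r_u(q-r_u)+\sum_{v\in V_1}c_v(p-c_v).
\]

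Finally I would optimize: each term satisfies $r_u(q-r_u)\le q^2/4$ and $c_v(p-c_v)\le p^2/4$, so the sum is at most $\frac{p q^2}{4}+\frac{q p^2}{4}=\frac{pq\,n}{4}\le\frac{n^3}{16}$ by AM--GM applied to $p+q=n$. Dividing by $\binom n3=\tfrac{n(n-1)(n-2)}{6}$ gives $\exlin(P_4,3,n)\le\frac{3}{8}\cdot\frac{n^2}{(n-1)(n-2)}$, and letting $n\to\infty$ yields $\pilin(P_4,3)\le\frac38$, hence $\pil(P_4,3)\le\frac38$. I expect the main obstacle to be purely organizational: carefully verifying the list of the twelve perfect paths through $\emptyset$ and checking that the two encodings above really are injective with the claimed images. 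Once that bookkeeping is done the inequalities are routine, and nothing is lost by discarding the weight-$2$/weight-$3$ conditions because the blow-up of $C_8$ in Proposition \ref{path_density} already attains the value $\frac38$.
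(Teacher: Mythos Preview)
Your argument is correct and complete. The enumeration of the twelve perfect paths through $\emptyset$ is accurate, the partition into types A and B according to membership of the weight-$3$ vertex is sound, and the observations that a type-A triple has a unique element in $W_1$ with exactly one $E$-neighbour in the remaining pair (and dually for type B) are exactly what is needed; the injections are trivially just the identity on triples, so the resulting bound $\sum_{u}r_u(q-r_u)+\sum_{v}c_v(p-c_v)\le \tfrac{pq^2}{4}+\tfrac{qp^2}{4}=\tfrac{pqn}{4}\le\tfrac{n^3}{16}$ is valid.

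Your route is genuinely different from the paper's. The paper first passes through the bisequence reformulation (Proposition~\ref{path-bisequence}) and then, for each element $e$ of the ``first-element'' class $A$, builds an auxiliary graph $G_e$ on the complementary class $W$, shows via Property~$V$ (Lemma~\ref{triangle-free3}) that each $G_e$ is triangle-free, and applies Mantel's theorem to get $|E(G_e)|\le w^2/4$; a symmetric argument handles the other type, and the two contributions combine to $aw\,n/4$. You bypass both the bisequence language and Mantel: you work directly with the single bipartite graph recording weight-$2$ membership in $S$, and your key inequality is the elementary $x(q-x)\le q^2/4$ applied to each degree. Both approaches land on the same product $pqn/4$, but yours is more self-contained, while the paper's fits into the general $P_{d+1}$/bisequence framework that one hopes to push to larger $d$.
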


Let $S$ be a set of size $n$ and $d$ a positive integer.  Let $R(d,n)$ be a family of pairs of sequences of elements in $S$, which we call \emph{$d$-bisequences}, one sequence of length $i$ and the other of length $d-i$, where $i\in [0,d]$, and where the $d$ elements in the pair of sequences are distinct.  Given a bisequence $w=\{a_1;a_2\}$ in $R(d,n)$ an \emph{initial-segment of $w$} is the set of the first $j$ elements of either $a_1$ or $a_2$ where $j\in [1,d]$.  We say that the set $R(d,n)$ has Property $V$ if the following conditions are satisfied:
\begin{enumerate}
	\item For each pair of bisequences $w=\{a_1;a_2\}$ and $x=\{a_3;a_4\}$ in $R(d,n)$, if $L$ is an initial segment of $w$ and all elements of $L$ are in the $d$-bisequence $x$, then $L$ is an initial segment of one of the sequences in $x$ with elements in the same order as in $w$.
	\item A bisequence $\{a_1;a_2\}$ and its reversal $\{a_2;a_1\}$ are not both in $R(d,n)$.
\end{enumerate} 

Let $B(d,n)$ denote the maximum size of a family of bisequences $R(d,n)$ with property $V$ and let $b(d,n)=\frac{B(d,n)}{\binom{n}{d}}$.

\begin{prop}\label{path-bisequence}
	$\exlin(P_{d+1},d,n)=b(d,n)$.
	\begin{proof}
		Without loss of generality, we can assume that $\emptyset$ is a vertex where the local $d$-cube density of $P_{d+1}$ is a maximum.  Now we construct a set $R(d,n)$ of bisequences.
		
		The bisequence $\{(a_1,a_2,\ldots,a_j);(b_1,b_2,\ldots,b_i)\}$ or its reversal is in $R(d,n)$ if and only if the intersection of $S$ and the sub-$d$-cube where $a_1,a_2,\ldots,a_j,b_1,b_2,\ldots,b_i$ are the nonconstant coordinates is equal to $\{a_1 a_2\cdots a_j, a_1 a_2\cdots a_{j-1}, \ldots,a_1,\emptyset,b_1,b_1 b_2,\ldots, b_1 b_2\cdots b_i\}$. Note that $i$ or $j$ could be equal to 0.  We claim that $R(d,n)$ has property $V$.
		
		Suppose it does not, say $w$ and $x$ are bisequences in $B(d,n)$ with $a_1a_2\cdots a_l$ an initial-segment of $w$ all of whose elements are in $x$ but not an initial-segment (or not in the same order as in $w$) of either of the sequences in $x=\{(b_1,b_2,\ldots,b_j);(c_1,c_2,\ldots,c_i)\}$.  Then $\{\emptyset,a_1,a_1 a_2,\ldots,a_1 a_2 \cdots a_l\}$ is a path contained in the $d$-cube containing the path $\{b_1 b_2\cdots b_j,b_1 b_2\cdots b_{j-1},\ldots,b_1,\emptyset,c_1,c_1 c_2,\ldots,c_1 c_2\cdots c_i\}$, but is not a subpath, a contradiction.
		
		Similarly, reversing the procedure, a family of bisequences with Property $V$ and size $B(d,n)$ can be used to construct $B(d,n)$ sub-$d$-cubes containing $\emptyset$ with exact copies of $P_{d+1}$.  Hence $\Gmaxin(P_{d+1},d,n)=B(d,n)$, and dividing by $\binom{n}{d}$ gives the desired equality.
	\end{proof}
\end{prop}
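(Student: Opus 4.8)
The plan is to prove the equivalent identity $\Gmaxin(P_{d+1},d,n)=B(d,n)$, from which the stated equality follows by dividing both sides by $\binom{n}{d}$. I would establish this by exhibiting a size-preserving correspondence between, on one side, the sub-$d$-cubes $R$ containing a fixed vertex $v\in S$ for which $S\cap R$ is an exact copy of $P_{d+1}$, and, on the other side, the members of a family of bisequences with Property $V$. Using an automorphism of $Q_n$ I would first normalize so that the maximizing vertex is $\emptyset$; that is, $\emptyset\in S$ and $\Gein(P_{d+1},d,n,S)=\Gmaxin(P_{d+1},d,n)$. This normalization is legitimate because automorphisms preserve exact copies and preserve the local count, so the maximum over $v$ and $S$ is attained with $v=\emptyset$.

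For the forward direction I would build the family $R(d,n)$. A sub-$d$-cube $R$ containing $\emptyset$ is determined by its $d$ varying coordinates (the fixed coordinates being forced to $0$), and its vertices are exactly the subsets of those coordinates. When $S\cap R$ is a perfect path, its two endpoints are antipodal in $R$, hence complementary subsets, so the vertex $\emptyset$ splits the path into two monotone arms whose coordinate sets partition the $d$ varying coordinates; recording the order in which coordinates are added along each arm yields a bisequence $\{(a_1,\ldots,a_j);(b_1,\ldots,b_i)\}$, well defined up to interchanging the arms, i.e.\ up to reversal. I would then verify Property $V$. Condition (2) holds because a bisequence and its reversal describe the same vertex set, so we retain one representative per path. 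Condition (1) is the crucial point: if $a_1\cdots a_l$ is an initial segment of $w$ whose elements all lie in another bisequence $x$ (equivalently, are varying coordinates of $R_x$), then each nested set $\{a_1\},\{a_1,a_2\},\ldots,\{a_1,\ldots,a_l\}$ is a vertex of $S$, since it lies on the path of $R_w$, and also lies in $R_x$, since it is $0$ outside the varying coordinates of $R_x$ and $R_x$ contains $\emptyset$. Because $S\cap R_x$ is \emph{exactly} the path of $x$, this whole increasing chain must sit on a single arm of $x$, which forces $a_1\cdots a_l$ to be an initial segment of one of the sequences of $x$ in the same order, precisely condition (1).

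For the reverse direction I would start from any family $R(d,n)$ with Property $V$ and define $S$ to be the union, over all its bisequences, of the $d+1$ path vertices each one encodes. I then check that every bisequence $w$ produces a sub-$d$-cube $R_w\ni\emptyset$ whose intersection with $S$ is exactly its own path. Each path vertex of $w$ visibly lies in $S\cap R_w$; the content is the reverse inclusion, that no vertex contributed by a \emph{different} bisequence $x$ intrudes into $R_w$. Such an intruder would be an initial-segment set of $x$ all of whose elements are varying coordinates of $R_w$, and Property $V$ (1), applied with the roles of $w$ and $x$ as needed, forces that set to lie on an arm of $w$ and hence to be a legitimate path vertex of $w$. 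This yields $\Gein(P_{d+1},d,n,S)=|R(d,n)|$, so $\Gmaxin\ge B(d,n)$; combined with $\Gmaxin\le B(d,n)$ from the forward direction applied to a maximizing $S$, equality follows.

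The main obstacle will be this reverse direction, specifically proving that $S\cap R_w$ contains nothing beyond the path of $w$ once $S$ is assembled as a union over the entire Property-$V$ family. Verifying exactness globally is delicate, because a single vertex may be shared by many bisequences, and one must show that every vertex of $S$ landing in $R_w$ arises as a genuine initial-segment vertex of $w$ in the correct cumulative order. This is exactly the step where condition (1), including its insistence on the \emph{same order}, is indispensable, and where the bookkeeping must be handled with care: tracking which elements are varying versus fixed coordinates of $R_w$, and checking that the intermediate prefixes, not merely the full element set, all lie on a single arm.
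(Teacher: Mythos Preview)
Your proposal is correct and follows essentially the same bijection the paper uses: normalize to $\emptyset$, encode each good sub-$d$-cube as the pair of ordered arms of the perfect path through $\emptyset$, and verify Property~$V$ by observing that an initial segment of one bisequence whose elements all lie in another cube would place extra $S$-vertices inside that cube, contradicting exactness of the second path. Your treatment of the reverse direction (defining $S$ as the union of path vertices and using condition~(1) symmetrically to rule out intruders in $R_w$) is in fact more explicit than the paper's, which dispatches that half in a single ``similarly, by reversing the procedure'' sentence.
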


Since $b(d,n)=\exlin(P_{d+1},d,n)$ is a non-increasing function of $n$, we can define $b(d)$ to be equal to $\ds\lim_{n\to\infty}\frac{b(d,n)}{\binom{n}{d}}$.  So $b(d)$ is the limit as $n$ goes to infinity of the maximum fraction of $d$-subsets of $n$ which can be the sets of elements of a family $R(d,n)$ of bisequences with Property $V$.  We have

\[
	\pilin(P_{d+1},d)=\lim_{n\to\infty}\frac{\exlin(P_{d+1},d,n)}{\binom{n}{d}}=\lim_{n\to\infty}\frac{b(d,n)}{\binom{n}{d}}=b(d)
\]
and we can find $\pilin(P_{d+1},d)$ by finding $b(d)$.

Clearly if $R(d,n)$ is a family of $d$-bisequences with Property $V$, then no symbol can appear as the first element of some sequence and not as the first element of another.  Furthermore, the following properties are easy to verify.

\begin{lem}\label{triangle-free3}
	Suppose $R(3,n)$ is a family of 3-bisequences with Property $V$.
	\begin{enumerate}[label=(\roman*)]
		\item If $\{bxy;\emptyset\}$ and $\{bxz;\emptyset\}$ are in $R(3,n)$, then $\{byz;\emptyset\}$ is not.
		\item If $\{bxz;\emptyset\}$ and $\{byz;\emptyset\}$ are in $R(3,n)$ then $\{bxy;\emptyset\}$ is not.
		\item If $\{bx;c\}$ and $\{bx;d\}$ are in $R(3,n)$, then $\{dx;c\}$ is not.
		\item If $\{bx;c\}$ and $\{dx;c\}$ are in $R(3,n)$, then $\{dx;b\}$ is not.
	\end{enumerate}

\begin{proof}
	Let $A=\{i\in[n] : i \textrm{ is the first element of some sequence in a 3-bisequence in }R(3,n)\}$
 	and let $W=[n]\setminus A$.  Let $a= \frac{\abs{A}}{n}$ and $w=\frac{\abs{W}}{n}=1-a$.
	
	(i) If the assumption in (i) holds, then $\{byz;\emptyset\}$ cannot be in $B(3,n)$ because it has ``$by$'' as an initial segment, and that is a subset of one of the sequences in $\{bxy;\emptyset\}$ but is not an initial segment, violating property $V$.
	
	Statements (ii), (iii), and (iv) are just as easy to verify.
\end{proof}
\end{lem}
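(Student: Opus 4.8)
The plan is to prove all four statements by the single device already used for (i): assume the ``forbidden'' bisequence is also present, then exhibit one ordered initial-segment whose underlying element set is contained in another of the bisequences but which fails to be a matching initial-segment of that bisequence, directly contradicting condition (1) of Property $V$. The only preliminary bookkeeping is to record exactly which sets occur as initial-segments of a $3$-bisequence: for a $(3,0)$-bisequence $\{b\alpha\beta;\emptyset\}$ they are $\{b\}$, the set $\{b,\alpha\}$ taken in the order $b,\alpha$, and $\{b,\alpha,\beta\}$; for a $(2,1)$-bisequence $\{\alpha\beta;\gamma\}$ they are $\{\alpha\}$, the set $\{\alpha,\beta\}$ taken in the order $\alpha,\beta$, and $\{\gamma\}$. (The remark preceding the lemma, that a symbol is always or never a first element, guarantees the set $A$ is well defined, but this is not needed for the four clashes below.)

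For (ii) I would take $w=\{byz;\emptyset\}$ (hypothesised) and $x=\{bxy;\emptyset\}$ (assumed present, for contradiction). The length-$2$ initial-segment of $w$ is the set $\{b,y\}$ in the order $b,y$; both its elements lie in $x$, whose element set is $\{b,x,y\}$, yet the length-$2$ initial-segment of the sequence $bxy$ is $\{b,x\}$ in the order $b,x$, so $\{b,y\}$ is not an initial-segment of $x$. This violates Property $V$(1), exactly as in (i); note the operative clash is between the hypothesised $\{byz;\emptyset\}$ and the excluded $\{bxy;\emptyset\}$, so the third bisequence $\{bxz;\emptyset\}$ plays no role in the contradiction.

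For the two $(2,1)$-cases I would argue identically. In (iii), assuming $\{bx;c\}$, $\{bx;d\}$ and $\{dx;c\}$ all lie in $R(3,n)$, I take the length-$2$ initial-segment $\{d,x\}$ (in the order $d,x$) of $\{dx;c\}$; its elements lie in $\{bx;d\}$, whose element set is $\{b,x,d\}$, but the length-$2$ initial-segment of $\{bx;d\}$ is $\{b,x\}$, so $\{d,x\}$ is not an initial-segment there, contradicting Property $V$(1). In (iv), assuming $\{bx;c\}$, $\{dx;c\}$ and $\{dx;b\}$ all lie in $R(3,n)$, I take the length-$2$ initial-segment $\{b,x\}$ (in the order $b,x$) of $\{bx;c\}$; its elements lie in $\{dx;b\}$, whose element set is $\{d,x,b\}$, but the length-$2$ initial-segment of $\{dx;b\}$ is $\{d,x\}$, so $\{b,x\}$ is not an initial-segment there, again contradicting Property $V$(1).

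I do not expect a genuine obstacle: only condition (1) of Property $V$ is ever invoked, condition (2) being relevant solely to check that the hypotheses are non-vacuous. The single point demanding care is to fix the correct orientation of each length-$2$ segment and to notice, in each of (iii) and (iv), which of the two hypothesised bisequences is the one that actually clashes with the excluded bisequence (in each claim the remaining hypothesised bisequence turns out to be compatible with the excluded one). Once the initial-segment sets are written out as above, the three verifications are immediate and mirror the displayed proof of~(i).
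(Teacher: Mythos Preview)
Your proposal is correct and follows exactly the paper's approach: the paper proves only (i), by exhibiting the length-$2$ initial-segment ``$by$'' of the excluded bisequence that sits inside $\{bxy;\emptyset\}$ without being an initial-segment there, and then simply asserts that (ii)--(iv) are ``just as easy to verify.'' Your write-up supplies precisely those three missing verifications in the same style, and your side observation that in each clause only two of the three bisequences actually clash is accurate (and consistent with the paper's argument for (i), where $\{bxz;\emptyset\}$ likewise plays no role).
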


	\begin{proof}[Proof of Proposition \ref{pilP_4,3}]
		By Proposition \ref{path-bisequence} it suffices to show that $b(3)\leq \frac{3}{8}$.  Let $R(3,n)$ be a family of 3-bisequences with Property $V$.  Let $A$ be the set of elements in $[n]$ which appear as the first element of some sequence in $R(3,n)$,  let $W=[n]\setminus A$, and let $a=\abs{A}$ and $w=\abs{W}$.  For each $e\in A$, let $G_e$ be the graph with vertex set $W$ and edge set $\{[x,y] : \{eyx;\emptyset\}\rm{\ or\ }\{exy;\emptyset\} \textrm{ or one of their reversals is in }R(3,n)\}$.  By statements (i) and (ii) in Lemma \ref{triangle-free3}, $G_e$ is a triangle free graph for each $e \in A$.  Hence by Turan's theorem, at most $\frac{w^2}{4}$ unordered pairs $(x,y)$ of element $x$ and $y$ in $W$ can appear as edges in $G_e$.  That means that the total number of 3-subsets of $[n]$ which can be the set of elements of a 3-bisequence in $B(3,n)$ with one element in $A$ and two in $W$ is at most $\frac{w^2}{4}\cdot a$.  Similarly, for each element $x$ in $W$ we let $G_x$ be the graph with vertex set $A$ and edge set $\{[b,c] : \{bx,c\}\textrm{ or }\{bc,x\}\textrm{ or either of their reversals is in }R(3,n)\}$.  By statements (iii) and (iv) in Lemma \ref{triangle-free3}, $G_x$ is triangle free, and an identical argument to the one for $G_e$ shows that the total number of 3-subsets of $[n]$ which can be the set of elements of a 3-bisequence in $B(3,n)$ with two elements in $A$ and one in $W$ is at most $w\cdot\frac{a^2}{4}$.  If $B(3,n)$ is the size of $R(3,n)$ then
	         \begin{align*}
	         	B(3,n) &\leq a\cdot \frac{w^2}{4}+ w\cdot \frac{a^2}{4}\\
		               &= \frac{aw}{4}\cdot n
	         \end{align*}
		This is maximized when $a=w=\frac{n}{2}$, so $B(3,n)\leq\frac{n^3}{16}$ and $b(3)=\lim_{n \to \infty}\frac{B(3,n)}{\binom{n}{3}}\leq \frac{3}{8}$.
	\end{proof}

Further, this also shows Theorem \ref{P4theorem} holds.

\begin{proof}[Proof of Theorem \ref{P4theorem}]
	By Proposition \ref{path_density} and Proposition \ref{pilP_4,3}
	\[
		\frac{3}{8}\leq\pi(P_4,3)\leq\pil(P_4,3)\leq\frac{3}{8}.
	\]
\end{proof}

So Conjecture \ref{pathconj} holds for $d=3$.  Lending credence to this conjecture is that Baber's flag algebra upper bound for $\pi(P_5,4)$ is .19200000058, while the conjecture with $d=4$ gives $\frac{24}{125} = .192$.  % However, we do have a general bound on the paths of Type 1 which gives some hope for an inductive proof if Conjecture \ref{path_conjecture} is shown for $k=4$.
% 
% Given an arbitrary chosen vertex $\vec{v}$ in $V(Q_n)$, let $f(P_{j+1},j)$ be the number of paths with $\vec{v}$ as an endpoint of the path (i.e. $d(\vec{v})=1$ in the induced sub-hypercube).
% 
% \begin{prop}
% 	$\ds f(P_{j+1},j)\leq\frac{j!}{j^j}=\frac{(j-1)!}{j^{j-1}}.$
% 	\begin{proof}
% 		\textbf{[Induction]}
% 		Let $a$ be the fraction of sets adjacent to $S$ that are chosen and $b=1-a$.
% 		
% 		Consider $j=3$.  Note that Lemma \ref{triangle-free3} shows that when $j=3$, the Type 1s are triangle-free for the non-chosen vertices. This means 
% 		\[
% 			f(P_{4},3)\leq 3a\frac{b^2}{2} = \frac{3}{2}(a^3-2a^2+a).
% 		\]
% 		Now, let $h(x)=\frac{3}{2}(a^3-2a^2+a)$.  Note that $h'(x)=\frac{3}{2}(3a^2-4a+1)=\frac{3}{2}(a-1)(3a-1)$ and so $\ds a=\frac{1}{3}$ maximizes $h(x)$ giving $\ds f(P_{4},3)\leq \frac{2}{9}=\frac{3!}{3^3}$.
% 		
% 		Now assume that $\ds f(P_{j+1},j)\leq \frac{j!}{j^j}$ and consider
% 		\[
% 			f(P_{j+2},j+1)\leq (j+1)ab^j f(j) = (j+1)ab^j \frac{j!}{j^j}.
% 		\]
% 		
% 		This is maximized when $\ds a=\frac{1}{j+1}$ and $\ds b=\frac{j}{j+1}$ giving
% 		\begin{align*}
% 			f(j+1) &\leq (j+1)\left(\frac{1}{j+1}\right)\left( \frac{j}{j+1} \right)^j\frac{j!}{j^j}\\
% 					&=\frac{j!}{(j+1)^j} = \frac{(j+1)!}{(j+1)^{j+1}}.
% 		\end{align*}
% 	\end{proof}
% \end{prop}

% section perfect_paths (end)
\section*{Acknowledgement}
We thank Zongchen Chen for his insightful observations on $d$-cube density.

% ----------------------------------------------------------------
\bibliographystyle{amsplain}
\bibliography{perfectpath2}
\end{document}